\newtheorem{theorem}{Theorem}[section]
\newtheorem{lemma}[theorem]{Lemma}
\newtheorem{corollary}[theorem]{Corollary}
\theoremstyle{definition}
\newtheorem{definition}[theorem]{Definition}
\theoremstyle{remark}
\numberwithin{equation}{section}
\begin{document}

\setcounter{page}{1}

\title{The pluriharmonic Hardy space and Toeplitz Operators}

\author[Yuanqi Sang  \MakeLowercase{and} Xuanhao Ding]{Yuanqi Sang \MakeLowercase{and} Xuanhao Ding$^{*}$}

\address{College of Mathematics and Statistics, Chongqing University, Chongqing 401331, P.R. China.}
\email{\textcolor[rgb]{0.00,0.00,0.84}{yqisang@163.com}}

\address{School of Mathematics and Statistics, Chongqing Technology and Business University,
 Chongqing 400067, P.R. China.}
\email{\textcolor[rgb]{0.00,0.00,0.84}{xuanhaod@qq.com}}

%\dedicatory{This paper is dedicated to Professor ABCD}

\let\thefootnote\relax\footnote{}

\subjclass[2010]{Primary 47B35; Secondary 30H10.}

\keywords{ Toeplitz operators, Plurharmonic Hardy space, Berezin transform, Matrix representation.}

\date{\today.
\newline \indent $^{*}$Corresponding author}

\begin{abstract}
Compared with harmonic Bergman spaces, this paper introduces a new function space which is called the pluriharmonic Hardy space $h^{2}(\mathbb{T}^{2})$. We character (semi-) commuting Toeplitz operators on $h^{2}(\mathbb{T}^{2})$ with bounded pluriharmonic symbols. Interestingly, these results are quite different from the corresponding properties of  Toeplitz operators on Hardy spaces, Bergman spaces and harmonic Bergman spaces. Our method for Toeplitz operators on $h^{2}(\mathbb{T}^{2})$ gives new insight into the study of commuting Toeplitz operators on harmonic Bergman spaces.
\end{abstract} \maketitle

\section{Introduction}
Let $\mathbb{D}=\{\xi\in\mathbb{C}:|\xi|<1\}$ be the unit disk in the complex plane $\mathbb{C}$ and  $\mathbb{T}$ be its boundary.
The bidisk $\mathbb{D}^{2}$ and the torus $\mathbb{T}^{2}$ are the subsets of $\mathbb{C}^{2}$   which are Cartesian
 products of two copies $\mathbb{D}$ and $\mathbb{T},$ respectively.
Let $d\sigma$ be the normalized Haar measure on $\mathbb{T}^{2},$
the Hardy space $H^{2}(\mathbb{T}^{2})$ is the
closure of the analytic polynomials in $L^{2}(\mathbb{T}^{2},d\sigma)$.
Let $dA$ denote the normalized area measure on  $\mathbb{D},$
the harmonic Bergman space $b^{2}$ is the closed subspace of the Lebesgue space $L^{2}(\mathbb{D},dA)$
consisting of all harmonic functions on $\mathbb{D}.$
One can check the relation
\begin{equation}{\label{1.9}}
b^{2}(\mathbb{D})=L^{2}_{a}(\mathbb{D})+\overline{L^{2}_{a}(\mathbb{D})},
\end{equation}
where $L^{2}_{a}(\mathbb{D})$ is the closed subspace of the Lebesgue space $L^{2}(\mathbb{D},dA)$
consisting of all holomorphic functions on $\mathbb{D}.$
It is well known that
$H^{2}(\mathbb{T})+\overline{H^{2}(\mathbb{T})}\cong L^{2}(\mathbb{T})$, the projection from  $L^{2}(\mathbb{T})$ onto $H^{2}(\mathbb{T})+\overline{H^{2}(\mathbb{T})}$ is
the identity operator. In this case, Topelitz operators on $H^{2}(\mathbb{T})+\overline{H^{2}(\mathbb{T})}$
are multiplication operators. However,
we have $H^{2}(\mathbb{T}^{2})+\overline{H^{2}(\mathbb{T}^{2})}\subsetneqq L^{2}(\mathbb{T}^{2}).$
For example, $z_{1}\overline{z_{2}}\in L^{2}(\mathbb{T}^{2}),$ but $z_{1}\overline{z_{2}}$ does not belong to
$H^{2}(\mathbb{T}^{2})+\overline{H^{2}(\mathbb{T}^{2})}.$ Inspired by the harmonic Bergman space, we introduce the following definition.
\begin{definition}
We define the pluriharmonic Hardy space $h^{2}(\mathbb{T}^{2})$ by
\begin{equation}{\label{1.2}}
h^{2}(\mathbb{T}^{2})=H^{2}(\mathbb{T}^{2})+\overline{H^{2}(\mathbb{T}^{2})}.
\end{equation}
$h^{2}(\mathbb{T}^{2})$ is a Hilbert with the inner product
\[\langle F,G\rangle=\int_{\mathbb{T}^{2}}F(z)\overline{G(z)}d\sigma(z).\]
It is easy to vertify  $ \{z^{i}_{1}z^{j}_{2}\}_{i,j\geq 0}\bigcup\{\overline{z_{1}}^{k}\overline{z_{2}}^{l}\}_{k,l\geq 0}$ is
an orthonormal basis of $h^{2}(\mathbb{T}^{2})$.
\end{definition}
Let $\partial_{i}$ denote $\frac{\partial}{\partial{\lambda_{i}}}$ and
$\bar{\partial_{i}}$ denote $\frac{\partial}{\partial{\overline{\lambda_{i}}}}.$
Recall that a complex-valued  $f\in C^{2}(\mathbb{D}^{2})$ is said to be pluriharmonic if
\[\partial_{i}\bar{\partial_{j}}f=0,~~~~i,j=1,2.\]
If $f\in h^{2}(\mathbb{T}^{2}),$ the Poisson integral of $f$ is pluriharmonic in $\mathbb{D}^{2}$.
\begin{definition}
For $f \in L^{2}(\mathbb{T}^{2}),$  the Toeplitz operator $\widehat{T}_f$
with symbol $f$ is densely defined  on the pluriharmonic Hardy space $h^{2}(\mathbb{T}^{2})$
by \[\widehat{T}_f h=Q(fh)\] for all polynomials $h,$ where $Q$ is the orthogonal projection from
$L^{2}(\mathbb{T}^{2})$ onto $h^{2}(\mathbb{T}^{2}).$
\end{definition}

On the harmonic Bergman space $b^{2}(\mathbb{D}),$   B. R. Choe and Y. J. Lee \cite{choe1999commuting}
proved that two  Toeplitz operators with holomorphic symbols commute
if and only if a nontrivial linear combination of the symbols is constant,
this result has been extended to various domains such as the polydisk (\cite{choe2006note}) and the unit ball (\cite{lee2002some}).
In \cite{choe1999commuting,choe2004commutants}, B. R. Choe and Y. J. Lee posed two questions.
First, {\em if an analytic Toeplitz operator and
a co-analytic Toeplitz operator commute on $b^{2}(\mathbb{D})$, then is one of their symbols constant ?}
 Second, {\em whether Toeplitz operators with general harmonic symbols
can commute on $b^{2}(\mathbb{D})$ only in the obvious cases ?}  B. R. Choe and Y. J. Lee \cite{choe2004commutants}
showed that the answer to the first question is yes under some additional noncyclictiy hypothesis,
and whether the noncyclicity hypotheses can be removed or not remains open.
We will be concerned with these two similar questions on $h^{2}(\mathbb{T}^{2}).$
\eqref{1.9} and \eqref{1.2} show that  $h^{2}(\mathbb{T}^{2})$ and
 $b^{2}(\mathbb{D})$ have similar structures.
In particular, $L^{2}_{a}$ is isometrically identified with a closed subspace of $H^{2}(\mathbb{T}^{2})$(see \cite{guo2009multiplication}),
by \eqref{1.9}, thus $b^{2}(\mathbb{D})$ is isometrically identified with a closed subspace of $h^{2}(\mathbb{T}^{2}).$
Our results for Toeplitz operators on $h^{2}(\mathbb{T}^{2})$ may offer some insight into the study of similar questions for
Toeplitz operators on $b^{2}(\mathbb{D}).$

On the pluriharmonic Hardy space $h^{2}(\mathbb{T}^{2}),$
 Liu and the second author \cite{liu2013toeplitz} obtained a characterization of (semi-) commuting Toeplitz
operators with holomorphic symbols.
For completeness, we will state these results as follows. In order to state these results, we need some notations.
Let $P$ be the projection from
$L^{2}(\mathbb{T}^{2})$ onto $H^{2}(\mathbb{T}^{2}).$ For each function $f$ in $L^{2}(\mathbb{T}^{2})$, let
\begin{equation*}
\begin{split}
f_{+}&=Pf,\\
f_{-}&=(Q-P)f.
\end{split}
\end{equation*}
Let $z=(z_{1},z_{2})\in \mathbb{T}^{2}.$
\begin{theorem}\label{01}\cite{liu2013toeplitz}
Let $\varphi,\psi$ be two bounded functions in $H^{2}(\mathbb{T}^{2})$. Then  $\widehat{T}_{\varphi}\widehat{T}_{\psi}=\widehat{T}_{\psi}\widehat{T}_{\varphi}$ if and only if

~~~~~~~~~~~~~~~~(i)  ~both $\varphi$ and $\psi$ are functions of one variable $z_{1}$; or

~~~~~~~~~~~~~~~~(ii)  both $\varphi$ and $\psi$ are functions of one variable $z_{2}$; or

~~~~~~~~~~~~~~~~(iii) a nontrivial linear combination of $\varphi$ and $\psi$ is constant.

\end{theorem}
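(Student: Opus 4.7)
The plan is to establish sufficiency by direct verification in each of (i)--(iii), and necessity by extracting polynomial identities from the commutator $[\widehat{T}_\varphi,\widehat{T}_\psi]$ evaluated on the antiholomorphic basis $\{\overline{z_1}^k\overline{z_2}^l:(k,l)\neq(0,0)\}$. Case (iii) is immediate from linearity. For (i) and (ii) I would exploit the variable-separating structure of $h^2(\mathbb{T}^2)$ to realize $\widehat{T}_\varphi$ with $\varphi=\varphi(z_1)$ as a direct sum of analytic Toeplitz-type operators acting only in the first variable, and invoke the one-variable commutation of analytic Toeplitz operators on $H^2(\mathbb{T})$.

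For necessity, the key structural observation is that $H^2(\mathbb{T}^2)$ is invariant under $\widehat{T}_\varphi$ whenever $\varphi\in H^\infty(\mathbb{T}^2)$, so with respect to the orthogonal decomposition $h^2(\mathbb{T}^2)=H^2(\mathbb{T}^2)\oplus\mathcal{H}^-$, where $\mathcal{H}^-$ is the orthogonal complement of $H^2(\mathbb{T}^2)$ inside $h^2(\mathbb{T}^2)$, each $\widehat{T}_\varphi$ is block upper-triangular with diagonal blocks $M_\varphi$ and $D_\varphi:=P_{\mathcal{H}^-}\widehat{T}_\varphi|_{\mathcal{H}^-}$. A direct matrix computation on the basis of $\mathcal{H}^-$ shows that the $((p,q),(k,l))$-entry of $D_\varphi D_\psi$ equals the Taylor coefficient $(\varphi\psi)_{k-p,\,l-q}$; since this is symmetric in $\varphi,\psi$, the diagonal blocks always commute, and the whole commutation collapses to the off-diagonal identity
\[
M_\varphi B_\psi + B_\varphi D_\psi = M_\psi B_\varphi + B_\psi D_\varphi,
\]
where $B_\varphi(\overline{z_1}^k\overline{z_2}^l)=\sum_{m,n\geq 0}a_{m+k,\,n+l}z_1^m z_2^n$ records the upper-triangular block.

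Evaluating this identity on $\overline{z_1}$ and on $\overline{z_2}$ yields the two functional equations $\tilde\varphi\,\psi_2^0=\tilde\psi\,\varphi_2^0$ and $\tilde\varphi\,\psi_1^0=\tilde\psi\,\varphi_1^0$, where $\tilde\varphi=\varphi-\varphi(0,0)$ and $\varphi_i^0$ is $\varphi$ restricted to the $i$-th coordinate axis with $\varphi(0,0)$ subtracted (and analogously for $\psi$). Discarding the trivial subcase $\tilde\varphi=0$ or $\tilde\psi=0$ (which is already (iii)), each equation presents two alternatives: either both axis-restrictions $\varphi_i^0,\psi_i^0$ vanish identically, or neither does and the equation expresses $\tilde\varphi/\tilde\psi$ as a function of $z_i$ alone. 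If both identities are nondegenerate, the ratio $\tilde\varphi/\tilde\psi$ depends on neither $z_1$ nor $z_2$ and is therefore a scalar, yielding case (iii).

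The main obstacle is the degenerate alternatives, where at least one pair $\varphi_i^0,\psi_i^0$ vanishes identically and the corresponding low-order identity carries only partial or no information; in the fully degenerate case $\varphi,\psi\in\mathbb{C}+z_1z_2\,H^\infty(\mathbb{T}^2)$ both low-order identities become tautological. To close these cases I would evaluate the off-diagonal identity on $\overline{z_1}^m\overline{z_2}^n$ for progressively larger $(m,n)$, peel off the appropriate factors of $z_1$ and $z_2$, and observe that the quotients inherit functional equations of exactly the same shape as those driving the nondegenerate step. An induction on the order of vanishing of $\varphi-\varphi(0,0)$ and $\psi-\psi(0,0)$ along $\{z_1z_2=0\}$ should then reduce every case either to a nondegenerate step or to a forced collapse to (i), (ii), or (iii). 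Making this recursion precise and ensuring termination for arbitrary bounded holomorphic symbols---and not only polynomial ones---is the most delicate step of the argument.
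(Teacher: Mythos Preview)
The paper does not prove this theorem; it is quoted from \cite{liu2013toeplitz}, and in Lemma~\ref{8} the implication you care about is simply attributed to \cite[Theorem~3.1]{liu2013toeplitz}. What the paper \emph{does} prove is precisely your first reduction: using the block decomposition of $\widehat{T}_\varphi$ with respect to $h^2=H^2\oplus\overline{H_0^2}$, it shows that the diagonal blocks always commute (your computation $D_\varphi D_\psi = D_{\varphi\psi}$ is exactly \eqref{33}) and that $[\widehat T_\varphi,\widehat T_\psi]=0$ is equivalent to the off-diagonal identity $T_\varphi\Gamma_{\bar\psi}^*+\Gamma_{\bar\varphi}^*S_\psi = T_\psi\Gamma_{\bar\varphi}^*+\Gamma_{\bar\psi}^*S_\varphi$. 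So through the sentence ``the whole commutation collapses to the off-diagonal identity'' you and the paper agree.

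Your attempt to finish necessity from that point is a different route from whatever \cite{liu2013toeplitz} does (the present paper does not reproduce it), and as written it has a real gap. The identities $\tilde\varphi\,\psi_i^0=\tilde\psi\,\varphi_i^0$ obtained from $\overline{z_1}$ and $\overline{z_2}$ are correct, and the fully nondegenerate case is fine. The problem is the mixed and fully degenerate cases. In the mixed case, say $\varphi_2^0=\psi_2^0\equiv 0$ but $\varphi_1^0,\psi_1^0\not\equiv 0$, your two low-order equations yield only $\tilde\varphi=c(z_1)\,\tilde\psi$ for some one-variable factor $c$; this relation alone is strictly weaker than (i)--(iii), so further constraints must come from higher $\overline{z_1}^m\overline{z_2}^n$. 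Here the claim that higher evaluations ``inherit functional equations of exactly the same shape'' is not justified: the term $B_\varphi D_\psi(\overline{z_1}^m\overline{z_2}^n)$ brings in a finite collection of low-order Taylor coefficients of $\psi$ multiplying \emph{shifted} copies of $\varphi$, and the resulting identity does not obviously factor as $\tilde\Phi\,\Psi^0=\tilde\Psi\,\Phi^0$ for any quotient symbols $\Phi,\Psi$. Until you name the inductive invariant precisely---what plays the role of $\tilde\varphi$ and $\varphi_i^0$ at the next step, and which integer strictly decreases---the recursion is only heuristic, and this is exactly the step where an argument valid for arbitrary bounded holomorphic symbols (not just polynomials) requires genuine work.
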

\begin{theorem}\label{02}\cite{liu2013toeplitz}
Let $\varphi,\psi$ be two bounded functions in $H^{2}(\mathbb{T}^{2})$. Then  $\widehat{T}_{\varphi}\widehat{T}_{\psi}=\widehat{T}_{\varphi \psi}$ if and only if

~~~~~~~~~~~~~~~~(i)  ~~~~both $\varphi$ and $\psi$ are functions of one variable $z_{1}$; or

~~~~~~~~~~~~~~~~(ii)  ~~both $\varphi$ and $\psi$ are functions of one variable $z_{2}$; or

~~~~~~~~~~~~~~~~(iii)  either $\varphi$ or $\psi$ is constant.

\end{theorem}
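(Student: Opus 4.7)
Sufficiency is routine. Case (iii) is immediate since $\widehat T_c=cI$ for a constant $c$. For case (i), with $\varphi,\psi$ functions of $z_1$ alone, I would verify the identity on the orthonormal basis $\{z_1^iz_2^j\}\cup\{\overline{z_1}^k\overline{z_2}^l\}$: on analytic vectors both sides agree trivially; on $\overline{z_1}^k\overline{z_2}^l$ the factor $\overline{z_2}^l$ is passive (any cross monomial $z_1^a\overline{z_2}^l$ that arises is killed by $Q$), reducing the computation to a one-variable identity on the circle, where $H^2(\mathbb T)+\overline{H^2(\mathbb T)}=L^2(\mathbb T)$, so $Q$ is the identity and the calculation collapses to commutative multiplication. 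Case (ii) is symmetric.

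For the necessity, I would write $\widehat T_\varphi\widehat T_\psi-\widehat T_{\varphi\psi}=QM_\varphi(I-Q)M_\psi$ on the polynomial core. Since $\psi\in H^\infty(\mathbb T^2)$, this defect vanishes automatically on every analytic polynomial, so the hypothesis has content only on the anti-analytic basis vectors $\overline{z_1}^k\overline{z_2}^l$ with $(k,l)\neq(0,0)$. A direct expansion yields
\[
(I-Q)M_\psi(\overline{z_1}^k\overline{z_2}^l)=\sum_{\substack{i>k\\ 0\leq j<l}}b_{ij}z_1^{i-k}\overline{z_2}^{l-j}+\sum_{\substack{0\leq i<k\\j>l}}b_{ij}\overline{z_1}^{k-i}z_2^{j-l},
\]
and a further computation shows that the anti-analytic component of $QM_\varphi$ applied to this expression cancels formally against the anti-analytic part of $\widehat T_{\varphi\psi}(\overline{z_1}^k\overline{z_2}^l)$. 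All the content of the hypothesis therefore sits in the analytic component, which after extracting the coefficient of $z_1^{p'}z_2^{q'}$ (with $m=p'+k$, $n=q'+l$) becomes
\[
\sum_{\substack{0\leq i<k\\l<j\leq n}}a_{m-i,n-j}b_{ij}+\sum_{\substack{k<i\leq m\\0\leq j<l}}a_{m-i,n-j}b_{ij}=0
\]
for every $(k,l)\neq(0,0)$ and every $m\geq k$, $n\geq l$, where $\varphi=\sum a_{pq}z_1^pz_2^q$ and $\psi=\sum b_{ij}z_1^iz_2^j$.

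To extract the three cases, I would specialize to $(k,l)=(1,0)$: the $(m,n)$-indexed equations package into the single formal identity $\bigl(\psi(0,z_2)-\psi(0,0)\bigr)\bigl(\varphi(z_1,z_2)-\varphi(0,z_2)\bigr)=0$, which forces either $\psi(0,z_2)\equiv\psi(0,0)$ or $\varphi$ to be a function of $z_2$ alone. Symmetrically, $(k,l)=(0,1)$ gives either $\psi(z_1,0)\equiv\psi(0,0)$ or $\varphi$ depends only on $z_1$. The resulting four-way case split includes two ``mixed'' subcases (e.g.\ $\varphi=\varphi(z_2)$ with $\psi(z_1,0)\equiv\psi(0,0)$); each closes by iterating the same reduction at $(k,l)=(0,l)$ for $l\geq 2$, which (unless $\varphi$ is constant) cascades to kill $b_{ij}$ for every $i\geq 1$ and produces case (ii). The hardest subcase is $\psi(z_1,0)=\psi(0,z_2)=\psi(0,0)$: here the lowest-order equations become vacuous, and one must push to $(k,l)=(k,1)$ and $(1,l)$ for $k,l\geq 2$; provided $\varphi$ depends nontrivially on at least one variable, an induction on the off-axis index forces $b_{ij}=0$ for every $(i,j)\neq(0,0)$, so $\psi$ is constant and case (iii) holds. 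The main obstacle is precisely this last subcase, where all constraints must be excavated layer by layer from higher-order identities.
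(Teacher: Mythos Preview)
Your overall strategy---reducing $\widehat T_\varphi\widehat T_\psi=\widehat T_{\varphi\psi}$ to the vanishing of $Q M_\varphi(I-Q)M_\psi$ on anti-analytic monomials, extracting the double-sum coefficient identity, and then specializing $(k,l)$ to peel off cases---is sound and gives a direct elementary proof. Two remarks on presentation: the defect is $-QM_\varphi(I-Q)M_\psi$, not $+$, though this is immaterial; and the anti-analytic component of $QM_\varphi(I-Q)M_\psi(\overline{z_1}^k\overline{z_2}^l)$ is not merely ``cancelled'' but identically zero, since multiplying a mixed monomial $z_1^a\overline{z_2}^b$ or $\overline{z_1}^a z_2^b$ by the analytic $\varphi$ can never land in $\overline{H^2_0}$.

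There is, however, a genuine gap in your handling of the hardest subcase. In case~4 you propose to ``push to $(k,l)=(k,1)$ and $(1,l)$ for $k,l\geq 2$.'' But inspect your own summation ranges: the coefficient $b_{11}$ enters the identity only through the first sum when $i=1\in[0,k)$ and $j=1\in(l,n]$, forcing $k\geq 2$ and $l=0$, or through the second sum when $k=0$ and $l\geq 2$. Thus no equation with $l=1$ or $k=1$ ever sees $b_{11}$, and your induction leaves $\psi=b_{00}+b_{11}z_1z_2$ undetermined. Concretely, $\varphi=\psi=z_1z_2$ satisfies every constraint you have written down in case~4, yet $\widehat T_{z_1z_2}^2\,\overline{z_1}^2=0\neq z_2^2=\widehat T_{z_1^2z_2^2}\,\overline{z_1}^2$. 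The fix is simple: the equations at $(k,0)$ for $k\geq 2$ are \emph{not} vacuous in case~4; they reduce (after using $b_{0j}=0$) to $\sum_{j\geq 1}a_{m-k+1,n-j}b_{k-1,j}=0$, which packages as $(\varphi-\varphi(0,z_2))\cdot\sum_{j\geq 1}b_{k-1,j}z_2^j=0$ and kills $b_{k-1,j}$ for all $j\geq 1$---including $j=1$---whenever $\varphi$ depends on $z_1$. Replace $(k,1),(1,l)$ by $(k,0),(0,l)$ and your induction closes.

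For comparison: the present paper does not prove this theorem but quotes it from \cite{liu2013toeplitz}. What the paper does contribute (Lemma~\ref{2}) is a matrix-representation reduction showing that, under the splitting $h^2=H^2\oplus\overline{H^2_0}$, the identity $\widehat T_\varphi\widehat T_\psi=\widehat T_{\varphi\psi}$ is equivalent to the single off-diagonal condition $T_\varphi\Gamma^*_{\bar\psi}+\Gamma^*_{\bar\varphi}S_\psi=\Gamma^*_{\overline{\varphi\psi}}$; the equivalence with the trichotomy is then deferred to the cited source. Your coefficient approach is more hands-on and self-contained, closer in spirit to the paper's treatment of the mixed case $\widehat T_\varphi\widehat T_{\bar\psi}$ in Lemma~\ref{3}, where a telescoping identity $L_{k,l}-L_{k+1,l+1}=R_{k,l}-R_{k+1,l+1}$ followed by a Berezin-transform limit does the work.
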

In \cite{Sang2016toeplitz}, the authors obtained a necessary and sufficient condition for an analytic Toeplitz operator that
commutes with another  co-analytic Toeplitz operator  on $h^{2}(\mathbb{T}^{2}).$
\begin{theorem}{\label{1.5}}\cite{Sang2016toeplitz}
Let $\varphi,\psi$ be two bounded functions in $H^{2}(\mathbb{T}^{2})$.
Then $\widehat{T}_{\varphi}\widehat{T}_{\bar{\psi}}=\widehat{T}_{\bar{\psi}}\widehat{T}_{\varphi}$ if and only if
$\widehat{T}_{\varphi}\widehat{T}_{\bar{\psi}}=\widehat{T}_{\varphi\bar{\psi}}$  if and only if

~~~~~~~~~(a) either $\varphi$ or $\psi$ is constant; or

~~~~~~~~~(b) $\varphi$  is a function of one variable $z_{1}$ and $\psi$ is a function of one variable $z_{2};$ or

~~~~~~~~~(c) $\varphi$  is a function of one variable $z_{2}$ and $\psi$ is a function of one variable $z_{1}.$
\end{theorem}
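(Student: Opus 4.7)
The plan is to prove the chain $(3)\Rightarrow(1)\Rightarrow(2)\Rightarrow(3)$, where (3) denotes the three-case characterization in the statement. \emph{Sufficiency $(3)\Rightarrow(1)$:} In case (a), one of the symbols is scalar and both identities are immediate. In case (b), with $\varphi=\varphi(z_1)$ and $\psi=\psi(z_2)$, a direct verification on the orthonormal basis $\{z_1^m z_2^n\}_{m,n\geq 0}\cup\{\overline{z_1}^{a}\overline{z_2}^{b}\}_{a+b\geq 1}$ of $h^{2}(\mathbb{T}^{2})$ shows $\widehat{T}_\varphi\widehat{T}_{\bar\psi} = \widehat{T}_{\varphi\bar\psi} = \widehat{T}_{\bar\psi}\widehat{T}_\varphi$: because $\varphi$ acts only in $z_1$ and $\bar\psi$ only in $z_2$, the two multiplications never create a cross-quadrant term to be killed by $Q$, and everything commutes. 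Case (c) follows by interchanging $z_1\leftrightarrow z_2$. \emph{$(1)\Rightarrow(2)$:} On an analytic polynomial $h$, $\widehat{T}_\varphi h=\varphi h\in H^2(\mathbb{T}^2)$, so $\widehat{T}_{\bar\psi}\widehat{T}_\varphi h=Q(\bar\psi\varphi h)=\widehat{T}_{\varphi\bar\psi}h$; symmetrically, on an anti-analytic polynomial $\bar g$, $\widehat{T}_\varphi\widehat{T}_{\bar\psi}\bar g=Q(\varphi\bar\psi\bar g)=\widehat{T}_{\varphi\bar\psi}\bar g$ holds unconditionally. Hence if $\widehat{T}_\varphi\widehat{T}_{\bar\psi}=\widehat{T}_{\bar\psi}\widehat{T}_\varphi$, then on the analytic part the commutation identifies both products with $\widehat{T}_{\varphi\bar\psi}$, while on the anti-analytic part that identification is automatic; combining, semi-commuting holds.

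\emph{$(2)\Rightarrow(3)$, the main content:} Since $\widehat{T}_\varphi\widehat{T}_{\bar\psi}=\widehat{T}_{\varphi\bar\psi}$ is automatic on $\overline{H^2(\mathbb{T}^2)}$, semi-commuting is equivalent to $Q(\varphi(I-Q)(\bar\psi h))=0$ for all analytic polynomials $h$. Expand $\varphi=\sum_{i,j\geq 0}a_{i,j}z_1^i z_2^j$ and $\psi=\sum_{k,l\geq 0}b_{k,l}z_1^k z_2^l$, and test this identity on $h=z_1^m z_2^n$; each coefficient of $z_1^M z_2^N$ in the output yields a scalar equation in the $a_{i,j}\overline{b_{k,l}}$. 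Set
\[
\tilde\varphi_M(z_2)=\sum_{l\geq 1}a_{M,l}z_2^l,\qquad \tilde\psi_k(z_2)=\sum_{l\geq 1}b_{k,l}z_2^l
\]
in $H^{2}(\mathbb{T})$, and define variable-swapped analogues $\tilde\varphi^{\#}_N(z_1),\tilde\psi^{\#}_l(z_1)$ in $H^{2}(\mathbb{T})$. Testing on $h=z_1^m$ (where only the ``$4$th-quadrant'' piece of $(I-Q)(\bar\psi h)$ contributes) and inducting on $m$ yields
\[
\langle(T_{z_2}^{\ast})^N\tilde\varphi_M,\tilde\psi_k\rangle_{H^2(\mathbb T)}=0\quad\text{for all }M,N,k\geq 0,
\]
i.e.\ $\tilde\varphi_M\perp T_{z_2}^N\tilde\psi_k$ in $H^2(\mathbb{T})$. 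Testing further on $h=z_1 z_2^n$ and on $h=z_1^m z_2^n$ with $m,n\geq 1$, and subtracting the already-established first-order relations, extracts the companion $T^{\ast}$-orthogonalities and their mixed versions, yielding
\[
\tilde\varphi_M\perp T_{z_2}^N(T_{z_2}^{\ast})^n\tilde\psi_k\qquad(M,k\geq 0,\ N,n\geq 0).
\]
The closed linear span of $\{T^N(T^{\ast})^n\tilde\psi_k\}_{N,n\geq 0}$ is invariant under both $T_{z_2}$ and $T_{z_2}^{\ast}$, and by the irreducibility of the Toeplitz $C^{\ast}$-algebra on $H^{2}(\mathbb{T})$ it coincides with $H^{2}(\mathbb{T})$ whenever $\tilde\psi_k\neq 0$. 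Hence if any $\tilde\psi_k\neq 0$, every $\tilde\varphi_M=0$, forcing $\varphi=\varphi(z_1)$. The variable-swapped analysis (using $h=z_2^n$, $z_2 z_1^m$, and $z_2^n z_1^m$) shows that if any $\tilde\psi^{\#}_l\neq 0$, then $\varphi=\varphi(z_2)$. A case analysis on the two families $\{\tilde\psi_k\}_k$ and $\{\tilde\psi^{\#}_l\}_l$ then yields: both families vanish $\Leftrightarrow$ $\psi$ is constant (case (a)); only the second vanishes $\Leftrightarrow$ case (b); only the first vanishes $\Leftrightarrow$ case (c); and if both families contain a nonzero member then $\varphi$ is both a function of $z_1$ and of $z_2$, hence constant, returning to case (a).

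\emph{Main obstacle:} The delicate step is extracting the full $C^{\ast}(T_{z_2})$-orbit orthogonality $\tilde\varphi_M\perp T^N(T^{\ast})^n\tilde\psi_k$ from the semi-commutator identity, rather than merely $\perp T^N\tilde\psi_k$ or $\perp(T^{\ast})^n\tilde\psi_k$ separately. For $k=0$ the extraction is fairly direct because the contributions from the $2$nd- and $4$th-quadrant pieces of $(I-Q)(\bar\psi h)$ decouple in the relevant equations; for $k\geq 1$ the two pieces mix, and the required orthogonalities must be unpacked by combining several $(m,n)$-relations in sequence and eliminating previously-established terms at each step. Once the full-orbit orthogonality is in place, the irreducibility of the Toeplitz $C^{\ast}$-algebra on $H^{2}(\mathbb{T})$ — which forces the joint $T$-, $T^{\ast}$-invariant span of any nonzero vector to be dense — delivers the conclusion in a single stroke.
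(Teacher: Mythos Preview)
Your reductions $(3)\Rightarrow(1)$ and $(1)\Rightarrow(2)$ are correct, and your reformulation of $(2)$ as $Q\big(\varphi(I-Q)(\bar\psi h)\big)=0$ on analytic polynomials $h$ is the right starting point. The initial orthogonalities $\tilde\varphi_M\perp T_{z_2}^N\tilde\psi_k$ (from $h=z_1^m$) and $\tilde\varphi_M\perp (T_{z_2}^{\ast})^n\tilde\psi_k$ (from $h=z_1^m z_2^n$ with $N=0$) can indeed be extracted by the induction on $m$ that you describe.

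The genuine gap is the step you yourself flag as the ``main obstacle'': obtaining the \emph{mixed} relations $\tilde\varphi_M\perp T_{z_2}^N(T_{z_2}^{\ast})^n\tilde\psi_k$. When you test on $h=z_1^m z_2^n$ and read off the coefficient of $z_1^M z_2^N$ with $N\geq 1$, the second-quadrant contribution of $(I-Q)(\bar\psi h)$ brings in terms of the form $a_{M-m+k,\,N-n+l}\,\overline{b_{k,l}}$ with $k\geq m+1$ and $0\leq l\leq n-1$; these involve the coefficients $b_{k,l}$ governing the \emph{other} family $\tilde\psi^{\#}_l(z_1)$, not $\tilde\psi_k(z_2)$. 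So the fourth- and second-quadrant contributions do not decouple for $k\geq 1$ and $N\geq 1$, and your ``inductive elimination of previously-established terms'' does not obviously succeed: at each stage you would need to subtract identities that live in the swapped variable, and no mechanism is offered for doing this coherently. Without the mixed orthogonality, irreducibility of the Toeplitz $C^{\ast}$-algebra cannot be invoked, since the span of $\{T^N\tilde\psi_k\}\cup\{(T^{\ast})^n\tilde\psi_k\}$ alone is in general a proper subspace of $H^2(\mathbb{T})$.

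The paper takes a completely different route for $(2)\Rightarrow(3)$. It expands $\varphi=\sum_i\varphi_i(z_2)z_1^i$ and $\psi=\sum_j\psi_j(z_2)z_1^j$ with $H^2(\mathbb{T})$-valued coefficients, and from the matrix condition $T_\varphi T_{\bar\psi}+\Gamma_{\bar\varphi}^{\ast}\Gamma_{\bar\psi}=T_{\varphi\bar\psi}$ derives the one-variable operator identity
\[
H_{\overline{\varphi_l}}^{\ast}H_{\overline{\psi_k}}=T_{\varphi_{l+1}}T_{\overline{\psi_{k+1}}}-\varphi_{l+1}\otimes\psi_{k+1}
\qquad\text{on }H^2(\mathbb{T}).
\]
Taking the Berezin transform and letting $|\lambda_2|\to 1^{-}$ kills both the Hankel side and the rank-one term, leaving $\varphi_{l+1}(\lambda_2)\overline{\psi_{k+1}(\lambda_2)}\to 0$, hence $\varphi_{l+1}\overline{\psi_{k+1}}=0$ a.e.\ on $\mathbb{T}$ for every $k,l\geq 0$. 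Since nonzero $H^2(\mathbb{T})$ functions cannot vanish on a set of positive measure, either all $\varphi_i$ ($i\geq 1$) vanish or all $\psi_j$ ($j\geq 1$) do; together with the variable-swapped version this gives the trichotomy. This function-theoretic argument sidesteps entirely the combinatorial disentangling that your irreducibility approach would require.
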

In this paper, we consider the problem of when two Toeplitz operaotrs with
bounded pluriharmonic symbols commute or semi-commute.
Our main results are as follows.
\begin{theorem}\label{27}
Let $f,g$ be two bounded functions in $h^{2}(\mathbb{T}^{2})$.
Then $\widehat{T}_f\widehat{T}_g=\widehat{T}_{fg}$ if and only if one of the following conditions is satisfied:

(A)   $f_{+}$ and $g_{+}$ are the functions of one variable $z_{1}$, and $f_{-}$ and $g_{-}$ are the functions of one variable $z_{2};$ or

(B)   $f_{+}$ and $g_{+}$ are the functions of one variable $z_{2}$, and $f_{-}$ and $g_{-}$ are the functions of one variable $z_{1};$ or

(C)   Either $f$ or $g$  is constant.
\end{theorem}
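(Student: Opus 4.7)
The plan is to prove both directions of the theorem by reducing the semicommutator equation to four independent block identities that are already classified in the literature.

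\emph{Sufficiency.} I would expand
\[
\widehat{T}_f\widehat{T}_g=\widehat{T}_{f_+}\widehat{T}_{g_+}+\widehat{T}_{f_+}\widehat{T}_{g_-}+\widehat{T}_{f_-}\widehat{T}_{g_+}+\widehat{T}_{f_-}\widehat{T}_{g_-}
\]
and check each summand under the three hypotheses. Under (A), Theorem~\ref{02} yields $\widehat{T}_{f_+}\widehat{T}_{g_+}=\widehat{T}_{f_+g_+}$ (both are functions of $z_1$), and taking adjoints it yields $\widehat{T}_{f_-}\widehat{T}_{g_-}=\widehat{T}_{f_-g_-}$ (both are antiholomorphic functions of $z_2$); cases (b)--(c) of Theorem~\ref{1.5} then dispose of the two cross products, each of which pairs a function of $z_1$ with an antiholomorphic function of $z_2$. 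Summing reconstructs $\widehat{T}_{fg}$. Case (B) is obtained by the swap $z_1\leftrightarrow z_2$, and (C) is immediate since Toeplitz operators with constant symbols are scalars.

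\emph{Necessity: reduction to four block identities.} I would start from
\[
(\widehat{T}_{fg}-\widehat{T}_f\widehat{T}_g)h=Q\bigl(f(I-Q)(gh)\bigr),
\]
and use that $h^2(\mathbb{T}^2)^\perp\subset L^2(\mathbb{T}^2)$ is the closed span of the ``mixed'' Fourier modes $z_1^a\bar z_2^b$ and $\bar z_1^a z_2^b$ with $a,b\ge 1$. A direct computation with the Fourier expansions of $f_{\pm}$ then shows that $Q(f_+w)\in H^2(\mathbb{T}^2)$ and $Q(f_-w)\in\overline{H^2(\mathbb{T}^2)}\ominus\mathbb{C}$ for every such mixed mode $w$. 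Combined with $(I-Q)(g_+h)=0$ for $h\in H^2(\mathbb{T}^2)$ and $(I-Q)(g_-h)=0$ for $h\in\overline{H^2(\mathbb{T}^2)}\ominus\mathbb{C}$, the orthogonality of the two summands of $h^2(\mathbb{T}^2)$ shows that $\widehat{T}_f\widehat{T}_g=\widehat{T}_{fg}$ is equivalent to the four independent identities
\[
\widehat{T}_{f_\epsilon}\widehat{T}_{g_\delta}=\widehat{T}_{f_\epsilon g_\delta}\qquad(\epsilon,\delta\in\{+,-\}).
\]
Theorem~\ref{02} and its adjoint classify the two diagonal cases, and Theorem~\ref{1.5} classifies the two off-diagonal cases.

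I expect the main obstacle to be combining the four resulting three-way disjuncts into precisely (A)--(C). The plan is to organize the case split on the variable dependence of $f_+$. If $f_+$ is a nonconstant function of $z_1$, Theorem~\ref{02} applied to the $(f_+,g_+)$ identity forces $g_+$ to be a function of $z_1$ (possibly constant), Theorem~\ref{1.5} applied to the $(f_+,g_-)$ identity forces $g_-$ to be an antiholomorphic function of $\bar z_2$ (possibly zero), and, whenever $g_+$ is also nonconstant, Theorem~\ref{1.5} applied to the $(f_-,g_+)$ identity forces $f_-$ to be an antiholomorphic function of $\bar z_2$; the $(f_-,g_-)$ identity is then automatically consistent, producing (A). The mirror sub-case with $f_+$ depending nontrivially on $z_2$ produces (B). The residual configurations, in which either $f_+$ is constant or a ``constant'' disjunct is forced at some step, propagate through the remaining three identities to collapse $f$ or $g$ entirely to a constant, yielding (C).
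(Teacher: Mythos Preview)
Your reduction of $\widehat{T}_f\widehat{T}_g=\widehat{T}_{fg}$ to the four independent identities $\widehat{T}_{f_\epsilon}\widehat{T}_{g_\delta}=\widehat{T}_{f_\epsilon g_\delta}$ is correct, and it is arrived at by a different and more direct route than the paper's. The paper writes each $\widehat{T}_\phi$ as a $2\times2$ operator matrix with respect to $h^2(\mathbb{T}^2)=H^2\oplus\overline{H^2_0}$, multiplies out, and then invokes its Lemmas~3.2--3.3 to promote the resulting block identities (involving $T,\Gamma,S$) back up to the four full Toeplitz semicommutator identities. Your Fourier-support argument---that $Q(f_+w)\in H^2$ and $Q(f_-w)\in\overline{H^2_0}$ for every mixed monomial $w$, together with $(I-Q)(g_+h)=0$ on $H^2$ and $(I-Q)(g_-h)=0$ on $\overline{H^2_0}$---bypasses this matrix machinery and yields the same decoupling in one step. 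This is a genuine simplification. One small point: for the identity $\widehat{T}_{f_-}\widehat{T}_{g_+}=\widehat{T}_{f_-g_+}$ the factors are in the order $\widehat{T}_{\bar\psi}\widehat{T}_\varphi$, which is not literally the order in Theorem~\ref{1.5}; you need the equivalence (c)$\Leftrightarrow$(f) of the paper's Lemma~3.3 (or a short conjugation argument) here.

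The gap is in the final combinatorial step. Your claim that the ``residual configurations, in which either $f_+$ is constant or a `constant' disjunct is forced at some step, \dots\ collapse $f$ or $g$ entirely to a constant'' is false. Take $f_+$ constant, $f_-=f_-(\bar z_2)$ nonzero, $g_+=g_+(z_1)$ nonconstant, $g_-=g_-(\bar z_2)$. All four block identities hold (the first two because $f_+$ is constant, the $(f_-,g_+)$ identity by case~(c) of Theorem~\ref{1.5}, and the $(f_-,g_-)$ identity since both are antiholomorphic functions of $z_2$), yet neither $f$ nor $g$ is constant: this configuration is~(A), not~(C). Similar residual configurations land in~(B). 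So the constant disjuncts do not simply propagate; one must track which single-variable constraints remain compatible. The paper handles this by brute force, tabulating all $3\times3\times3\times3$ combinations of disjuncts from the four classifications in a $9\times9$ table and checking that every cell is one of (A), (B), (C). Your case split on $f_+$ can be made to work, but the ``$f_+$ constant'' branch requires its own three-way analysis of $(f_-,g_+)$ and $(f_-,g_-)$, not a one-line dismissal.
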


\begin{theorem}\label{26}
Let $f,g$  be two bounded functions in  $h^{2}(\mathbb{T}^{2})$.
Then $\widehat{T}_f\widehat{T}_g=\widehat{T}_g\widehat{T}_f$ if and only if one of the following conditions is satisfied:

(I)   $f_{+}$ and $g_{+}$ are the functions of one variable $z_{1}$, and $f_{-}$ and $g_{-}$ are the functions of one variable $z_{2};$

(II)   $f_{+}$ and $g_{+}$ are the functions of one variable $z_{2}$, and $f_{-}$ and $g_{-}$ are the functions of one variable $z_{1};$

(III)  A nontrivial linear combination of $f$ and $g$ is constant.
\end{theorem}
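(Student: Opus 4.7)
The proof will split into sufficiency and necessity. For sufficiency, case (III) is immediate: if $\alpha f+\beta g=c$ with $\beta\ne 0$, then $\widehat{T}_g=\beta^{-1}(cI-\alpha\widehat{T}_f)$ is an affine function of $\widehat{T}_f$. Cases (I) and (II) are symmetric under $z_1\leftrightarrow z_2$, so it is enough to verify (I). Expanding
\[
[\widehat{T}_f,\widehat{T}_g]=[\widehat{T}_{f_+},\widehat{T}_{g_+}]+[\widehat{T}_{f_-},\widehat{T}_{g_-}]+[\widehat{T}_{f_+},\widehat{T}_{g_-}]+[\widehat{T}_{f_-},\widehat{T}_{g_+}],
\]
the first summand vanishes by Theorem~\ref{01}(i) applied to $f_+,g_+$ which are both functions of $z_1$; the second vanishes after taking adjoints (recall $\widehat{T}_{f_-}^{*}=\widehat{T}_{\overline{f_-}}$) and applying Theorem~\ref{01}(ii) to the holomorphic functions $\overline{f_-},\overline{g_-}$ of $z_2$; and the two mixed summands vanish by Theorem~\ref{1.5}(b)(c), whose hypothesis is exactly that one holomorphic factor is a function of $z_1$ and the other of $z_2$.

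For necessity, I would begin from the same four-term expansion, now set to zero, and separate the summands by examining matrix entries in the orthonormal basis $\{z_1^{i}z_2^{j}\}_{i,j\ge 0}\cup\{\overline{z_1}^{k}\overline{z_2}^{l}\}_{k,l\ge 0,(k,l)\ne(0,0)}$. A direct calculation gives $\langle \widehat{T}_\varphi (z_1^{k}z_2^{l}),z_1^{p}z_2^{q}\rangle=\hat\varphi(p-k,q-l)$ (with negative exponents interpreted as conjugates), and since $\hat\varphi$ for $\varphi\in h^2(\mathbb{T}^2)$ is supported in the first and third closed quadrants of $\mathbb{Z}^2$, each of the four commutators contributes to matrix entries in a specific region of the bi-lattice of indices. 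By pushing the test exponents deep into a single region (for instance $k,l,p,q\to+\infty$) I can isolate a single commutator and force it to vanish as an operator. This yields $[\widehat{T}_{f_+},\widehat{T}_{g_+}]=0$, $[\widehat{T}_{f_-},\widehat{T}_{g_-}]=0$, $[\widehat{T}_{f_+},\widehat{T}_{g_-}]=0$, and $[\widehat{T}_{f_-},\widehat{T}_{g_+}]=0$ individually. Theorem~\ref{01} (after passing to adjoints for the antiholomorphic pair) then restricts each of $(f_+,g_+)$ and $(\overline{f_-},\overline{g_-})$ to one of three patterns, while Theorem~\ref{1.5} restricts each of the mixed pairs to its own three patterns. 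A combinatorial enumeration of the compatible joint choices, given the constraint that $f_+$ (respectively $g_+,f_-,g_-$) appears in two of the four analyses, will collapse precisely to (I), (II), or (III).

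The main obstacle will be making the isolation step rigorous, since the four commutators overlap in the supports of their matrix entries on low-index basis vectors, and partial cancellations across them could a priori allow other commuting pairs. I plan to handle this asymptotically: let one pair of exponents tend to infinity while the other stays bounded, and bound the non-dominant commutators in norm on the tested vector, so that only one of the four survives. This asymptotic separation is also what distinguishes the commuting theorem from the semi-commuting Theorem~\ref{27}: a nontrivial affine relation between $f$ and $g$ (case (III)) produces global cancellations that are invisible to any single isolation limit and so must be identified as a separate branch once the structural classification from the four isolated commutators has been obtained.
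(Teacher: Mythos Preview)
Your sufficiency argument is fine and essentially parallels the paper's (the paper phrases (I) and (II) via Theorem~\ref{27} rather than Theorems~\ref{01} and~\ref{1.5}, but that is cosmetic).

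The necessity argument has a genuine gap. You claim that from $[\widehat{T}_f,\widehat{T}_g]=0$ one can isolate each of the four summands and conclude in particular that $[\widehat{T}_{f_+},\widehat{T}_{g_-}]=0$ and $[\widehat{T}_{f_-},\widehat{T}_{g_+}]=0$ individually. This is false. Take $g_+=z_1z_2$, $g_-=\overline{z_1z_2}$ and $f=g$ (or $f=2g$, if you want $f\neq g$). Then case~(III) holds, so $[\widehat{T}_f,\widehat{T}_g]=0$, yet $[\widehat{T}_{f_+},\widehat{T}_{g_-}]=[\widehat{T}_{z_1z_2},\widehat{T}_{\overline{z_1z_2}}]\neq 0$ by Theorem~\ref{1.5}, since $z_1z_2$ is nonconstant and depends on both variables. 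The two mixed commutators cancel against each other, not separately. Your asymptotic-separation heuristic cannot overcome this: for any choice of test exponents the two mixed terms contribute to the same matrix regions and there is no limit in which one survives and the other dies. You seem to sense this in your last paragraph, but the plan ``first isolate all four, then treat (III) as a separate branch'' is self-contradictory, because the very examples coming from (III) block the isolation step.

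What does hold, and what the paper actually proves, is that the two \emph{pure} commutators vanish: from the off-diagonal blocks of the $2\times 2$ matrix representation of $[\widehat{T}_f,\widehat{T}_g]$ one extracts exactly the conditions of Lemma~\ref{8}(b) and~(d), hence $[\widehat{T}_{f_+},\widehat{T}_{g_+}]=0$ and $[\widehat{T}_{f_-},\widehat{T}_{g_-}]=0$. For the mixed part one only obtains
\[
[\widehat{T}_{f_+},\widehat{T}_{g_-}]=[\widehat{T}_{g_+},\widehat{T}_{f_-}],
\]
a single relation rather than two vanishing statements. The paper then runs through the $3\times 3$ possibilities for $(f_+,g_+)$ and $(f_-,g_-)$ coming from Lemma~\ref{8}, and in each case analyzes this residual equation: in some cases directly via Theorem~\ref{1.5}, in the ``both depend on the same variable'' case via a Berezin-transform computation (Lemma~\ref{1}) that reduces to the one-variable Axler--\v{C}u\v{c}kovi\'c argument, and in the linear-combination cases by substituting the relation into the mixed identity. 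Your combinatorial enumeration would still be needed at the end, but it must be fed by this weaker mixed relation, not by the four-way vanishing you assumed.
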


This paper is organized in the following way. In section 2, we present some preliminaries.
By making use of matrix representation,
we give some new characterization
of Theorem \ref{01}, Theorem \ref{02} and Theorem \ref{1.5} in section 3.
Using these results, we  completely
characterize (semi-)commuting Toeplitz operators with bounded pluriharmonic harmonic
symbols in section 4 and section 5, respectively.
\section{Preliminaries}

\subsection{Integral representation}
\hspace*{\parindent}
Recall that the  Hardy space $H^{2}(\mathbb{T})$ is a reproducing kernel Hilbert space, with the kernel
\[K_{{\lambda}_{1}}(\omega_{1})=\frac{1}{1-\overline{{\lambda}_{1}}\omega_{1}},~~\lambda_{1},\omega_{1}\in\mathbb{D},\]
so that
\begin{equation}\label{28}
\begin{split}
h({\lambda}_{1})=\langle~ h,K_{{\lambda}_{1}}\rangle
\end{split}
\end{equation}
for every $h\in H^{2}(\mathbb{D}).$
We will call $k_{{\lambda}_{1}}(\omega_{1})=K_{{\lambda}_{1}}/\|K_{{\lambda}_{1}}\|
=\frac{({1-|{\lambda}_{1}|^{2}})^{\frac{1}{2}}}{1-\overline{{\lambda}_{1}}\omega_{1}}$ the
normalized reproducing kernel.
Clearly, the reproducing kernel of $H^{2}(\mathbb{D}^{2})$ at the point $\lambda$ with
coordinates $({\lambda}_{1},{\lambda}_{2})$ in $\mathbb{D}^{2}$ is given by
\[\mathbf{K}_{{\lambda}}(\omega)=\prod^{2}_{i=1}K_{{\lambda}_{i}}(\omega_{i}).\]
Thus the normalized reproducing kernel $\mathbf{k}_{{\lambda}}$ of $H^{2}(\mathbb{D}^{2})$
is in the form \[\mathbf{k}_{{\lambda}}(\omega)=\prod^{2}_{i=1}k_{{\lambda}_{i}}(\omega_{i}).\]

If $f\in L^{1}(\mathbb{T}^{2}),$
the Poisson integral of $f$ is given by
\begin{equation*}
\begin{split}
\mathcal{P}[f](\lambda)&=\int_{\mathbb{T}^{2}}f(z)\prod^{2}_{i=1}\frac{1-|\lambda_{i}|^{2}}{|1-\lambda_{i}\bar{z_{i}}|^{2}}d\sigma(z)\\
&=\int_{\mathbb{T}^{2}}f(z)|\mathbf{k}_{{\lambda}}(z)|^{2}d\sigma(z)\\
&=\langle f\mathbf{k}_{{\lambda}},\mathbf{k}_{{\lambda}}\rangle.
\end{split}
\end{equation*}
Since $\lim_{r\to1}\mathcal{P}[f](rz)=f(z)$ for almost every $z \in\mathbb{T}^{2}$ \cite[Theorem 2.3.1]{rudin1969function}, we can identify functions in  $h^{2}(\mathbb{T}^{2})$
with its Poisson integral.

For any bounded linear operator $S$ on $H^{2}(\mathbb{T}^{2})$, the Berezin transform of $S$ is the function $\widetilde{S}$ on
$\mathbb{D}^{2}$ defined by
\begin{equation}\label{52}
\begin{split}
\widetilde{S}(\lambda)&=\langle S\mathbf{k}_{{\lambda}},\mathbf{k}_{{\lambda}}\rangle\\
&=\int_{\mathbb{T}^{2}}S\mathbf{k}_{{\lambda}}(z)\overline{\mathbf{k}_{{\lambda}}(z)}d\sigma(z).
\end{split}
\end{equation}

For $f\in L^{2}(\mathbb{T}^{2}),$ the Hardy projection $P$ has the integral representation
\begin{equation}\label{30}
\begin{split}
Pf(\lambda)=&\int_{\mathbb{T}^{2}}f(z)\overline{\mathbf{K}_{{\lambda}}(z)}d\sigma(z).\\
\end{split}
\end{equation}
Similarly, let $P^{-}$ be the orthogonal projection from $L^{2}(\mathbb{T}^{2})$
onto $\overline{H^{2}(\mathbb{T}^{2})},$
we can write the projection $P^{-}$ as the integral operator
\begin{equation}\label{31}
\begin{split}
P^{-}f(\lambda)=&\int_{\mathbb{T}^{2}}f(z)\mathbf{K}_{{\lambda}}(z)d\sigma(z).\\
\end{split}
\end{equation}
The above integral formulas for $P$ and $P^{-}$ show that  $P$ and $P^{-}$ can be
extend to $L^{1}(\mathbb{T}^{2}).$
By \eqref{30} and \eqref{31}, we have
\begin{equation}\label{56}
\begin{split}
P(f)(0)=\int_{\mathbb{T}^{2}}f(z)d\sigma(z)=P^{-}(f)(0).
\end{split}
\end{equation}
In addition, the reproducing kernel $\mathbf{K}_{\lambda}$ has the following nice property \cite[Theorem 1.3]{ding2003products}:
\begin{equation}\label{54}
\begin{split}
P(\bar{\varphi}\mathbf{K}_{\lambda})=\overline{\varphi(\lambda)}\mathbf{K}_{\lambda}
\end{split}
\end{equation}
if $\varphi\in H^{2}(\mathbb{T}^{2}).$

Each point evaluation is easily verified to be a bounded linear
functional on $h^{2}(\mathbb{T}^{2})$, the Riesz representation theorem tells us that there exists a unique function
$\mathbf{R}_{{\lambda}}$ in $h^{2}(\mathbb{T}^{2})$ such that
\begin{equation}\label{29}
\begin{split}
g(\lambda)=\langle g,\mathbf{R}_{{\lambda}}\rangle=\int_{\mathbb{T}^{2}}g(z)\overline{\mathbf{R}_{{\lambda}}(z)}d\sigma(z)
\end{split}
\end{equation}
for all $g$ in $h^{2}(\mathbb{T}^{2})$.
Since \eqref{1.2}, there is a simple relation between  $\mathbf{R}_{{\lambda}}$ and $\mathbf{K}_{{\lambda}}:$
\begin{equation}
\begin{split}
\mathbf{R}_{{\lambda}}=\mathbf{K}_{{\lambda}}+\overline{\mathbf{K}_{{\lambda}}}-1.
\end{split}
\end{equation}
Thus each $\mathbf{R}_{{\lambda}}$ is real-valued and the formula \eqref{29}
leads us to the following integral representation of the projection $Q$:
\begin{equation}\label{45}
\begin{split}
Qf(\lambda)=&\int_{\mathbb{T}^{2}}f(z)\mathbf{R}_{{\lambda}}(z)d\sigma(z),\\
\end{split}
\end{equation}
we see  from \eqref{45} that the projection $Q$ can be rewritten as
\begin{equation*}
\begin{split}
Qf=P(f)+P^{-}(f)-P(f)(0)
\end{split}
\end{equation*}
for $f\in L^{2}(\mathbb{T}^{2}).$

For $f\in L^{1}(\mathbb{T}^{2})$ and fixed ${\lambda}_{i}\in \mathbb{D}~ (1\leq i\leq 2),$ we define
\begin{equation*}
\begin{split}
(P_{i}f)|_{z_{i}={\lambda}_{i}}=\int_{\mathbb{T}}f(z)\overline{K_{{\lambda}_{i}}(z_{i})}d\sigma(z_{i})
\end{split}
\end{equation*}
and
\begin{equation*}
\begin{split}
(P^{-}_{i}f)|_{z_{i}={\lambda}_{i}}=\int_{\mathbb{T}}f(z)K_{{\lambda}_{i}}(z_{i})d\sigma(z_{i}).
\end{split}
\end{equation*}

Using the boundedness of $P_{i}$ and $P^{-}_{i}$ on $L^{2}(\mathbb{T}^{2})$, one can easily verify the following facts:

 $\bullet$  $ P_{1}$ commutes with $P_{2}$, and $P=P_{1}P_{2}.$

 $\bullet$ $P^{-}_{1}$ commutes with $P^{-}_{2}$, and $P^{-}=P^{-}_{1}P^{-}_{2}.$

\subsection{Matrix representation}
\hspace*{\parindent}
For $f \in L^{2}(\mathbb{T}^{2}),$  the Toeplitz operator ${T}_f$ and the small Hankel operator $\Gamma_f$
with symbol $f$ are densely defined  on the Hardy space $H^{2}(\mathbb{T}^{2})$
by \[{T}_f h=P(fh)~~~~~~\text{and}~~~~~~~\Gamma_f h=(Q-P)(fh)\]
 for all polynomials $h.$

For $\phi\in L^{2}(\mathbb{T}^{2})$, under the decomposition $h^{2}(\mathbb{T}^{2})= H^{2}(\mathbb{T}^{2})\bigoplus\overline{{H^{2}_{0}(\mathbb{T}^{2})}}$,
where ${{H^{2}_{0}(\mathbb{T}^{2})}}=\Big\{f\in{{H^{2}(\mathbb{T}^{2})}}:f(0)=0\Big\},$
the Toeplitz operator $\widehat{T}_{\phi}$ has the following operator matrix representation:
\begin{gather}\label{36}
\widehat{T}_{\phi}=
\begin{bmatrix}T_\phi& {\Gamma^{*}_{\bar{\phi}}}\\
   \Gamma_\phi &  S_\phi\end{bmatrix},\quad
\end{gather}
where the operator $S_\phi$ is an operator on $\overline{{H^{2}_{0}(\mathbb{T}^{2})}}$ defined by
\begin{equation}
\begin{split}
S_\phi \bar{v}=(Q-P)(\phi \bar{v})
\end{split}
\end{equation}
for every polynomial $v\in {H^{2}_{0}(\mathbb{T}^{2})}.$
If $\phi\in L^{2}(\mathbb{T}^{2}),$ we define ${\Gamma^{*}_{\bar{\phi}}}\bar{v}$ by
\[{\Gamma^{*}_{\bar{\phi}}}\bar{v}(\lambda)
=P(\phi \bar{v})(\lambda)
=\int_{\mathbb{T}^{2}}\phi \bar{v} \overline{\mathbf{K}_{{\lambda}}}d\sigma.\]
Note that the star need no longer be the adjoint (but would of course coincide with the adjoint in case the operator
${\Gamma_{\bar{\phi}}}$ is itself bounded).

The following lemma shows relation between Toeplitz operators $T_{\varphi}$ and Hankel operators $\Gamma_\varphi$.
It is quite useful in studying products of Toeplitz operators.

\begin{lemma}
Let $f,g$ and $h$ be three functions in $ L^{\infty}(\mathbb{T}^{2})$. If $\widehat{T}_{f}\widehat{T}_{g}=\widehat{T}_{h},$ then
the following identity holds.
\begin{equation}\label{24}
\begin{split}
T_{h}=T_{f}T_{g}+{\Gamma^{*}_{\bar{f}}}{\Gamma_{g}}.
\end{split}
\end{equation}
\end{lemma}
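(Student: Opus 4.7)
The plan is to read the identity \eqref{24} directly off the block matrix representation \eqref{36}. Since $\widehat{T}_f\widehat{T}_g$ and $\widehat{T}_h$ both act on $h^2(\mathbb{T}^2)=H^2(\mathbb{T}^2)\oplus\overline{H^2_0(\mathbb{T}^2)}$ and are assumed equal on (a dense subset of) this space, their $2\times 2$ block matrices with respect to this decomposition must be entrywise equal. So the strategy is to expand the left-hand side via block multiplication and then match the $(1,1)$ entries with the $(1,1)$ entry of $\widehat{T}_h$.

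Concretely, I would write
\begin{equation*}
\widehat{T}_f\widehat{T}_g=\begin{bmatrix}T_f & \Gamma^{*}_{\bar f}\\ \Gamma_f & S_f\end{bmatrix}\begin{bmatrix}T_g & \Gamma^{*}_{\bar g}\\ \Gamma_g & S_g\end{bmatrix}=\begin{bmatrix}T_fT_g+\Gamma^{*}_{\bar f}\Gamma_g & T_f\Gamma^{*}_{\bar g}+\Gamma^{*}_{\bar f}S_g\\ \Gamma_fT_g+S_f\Gamma_g & \Gamma_f\Gamma^{*}_{\bar g}+S_fS_g\end{bmatrix},
\end{equation*}
and compare this with
\begin{equation*}
\widehat{T}_h=\begin{bmatrix}T_h & \Gamma^{*}_{\bar h}\\ \Gamma_h & S_h\end{bmatrix}.
\end{equation*}
Reading off the $(1,1)$-block yields $T_h=T_fT_g+\Gamma^{*}_{\bar f}\Gamma_g$, which is \eqref{24}.

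The only point requiring a bit of care is that the block matrix multiplication be justified rather than merely formal. To verify this, I would apply both sides of $\widehat{T}_f\widehat{T}_g=\widehat{T}_h$ to an arbitrary analytic polynomial $v\in H^2(\mathbb{T}^2)$ (identified with $\binom{v}{0}$ under the decomposition) and then project onto the first summand $H^2(\mathbb{T}^2)$ using $P$. Since $\widehat{T}_g v=T_g v+\Gamma_g v$ with $T_g v\in H^2$ and $\Gamma_g v\in \overline{H^2_0}$, applying $\widehat{T}_f$ gives a first-component contribution of $T_f(T_g v)+\Gamma^{*}_{\bar f}(\Gamma_g v)$; the first component of $\widehat{T}_h v$ is $T_h v$. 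This produces the identity on the dense set of polynomials, which is exactly the content of \eqref{24} as an operator identity on polynomials.

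I do not anticipate a genuine obstacle here: the lemma is essentially a bookkeeping consequence of \eqref{36}. The main thing to be mindful of is that $\Gamma^{*}_{\bar f}$ is defined as an unbounded ``formal adjoint'' on the subspace $\overline{H^2_0(\mathbb{T}^2)}$ (see the remark following the definition), so every manipulation should be performed on polynomials, where all of $T_g$, $\Gamma_g$, $T_f$, and $\Gamma^{*}_{\bar f}$ are well defined and produce outputs in the appropriate components. Once this is set up, the identity \eqref{24} follows immediately from comparing the $(1,1)$ blocks.
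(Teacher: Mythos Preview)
Your proposal is correct and follows essentially the same approach as the paper: both write out the block matrix representation \eqref{36} of $\widehat{T}_f$, $\widehat{T}_g$, and $\widehat{T}_h$, multiply the first two, and read off the $(1,1)$ entry to obtain \eqref{24}. Your added remark about checking the block multiplication on polynomials is a welcome bit of care that the paper leaves implicit.
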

\begin{proof}To prove the above identity we use the matrix representations of operators
$\widehat{T}_{f},\widehat{T}_{g}$ and $\widehat{T}_{fg}$ under the decomposition $h^{2}(\mathbb{T}^{2})= H^{2}(\mathbb{T}^{2})\bigoplus\overline{{H^{2}_{0}(\mathbb{T}^{2})}}$.
 Since $\widehat{T}_{f}\widehat{T}_{g}=\widehat{T}_{fg}$, computing the product of the matrices of $\widehat{T}_{f}$ and $\widehat{T}_{g}$ gives
\begin{gather}\label{37}
\widehat{T}_{f}\widehat{T}_{g}=\begin{bmatrix}T_{f}T_{g}+{\Gamma^{*}_{\bar{f}}}{\Gamma_{g}} & T_{f}{\Gamma^{*}_{\bar{g}}}+{\Gamma^{*}_{\bar{f}}}{S_{g}}\\
\Gamma_{f}T_{g}+S_{f}{\Gamma_{g}} & \Gamma_{f}{\Gamma^{*}_{\bar{g}}}+S_{f}S_{g}\end{bmatrix}.\quad
\end{gather}
On the other hand,
\begin{gather*}
\widehat{T}_{h}=\begin{bmatrix}T_{h}&{\Gamma^{*}_{\overline{h}}} \\ \Gamma_{h} & S_{h}\end{bmatrix}.\quad
\end{gather*}
Comparison of the two matrix representations of $\widehat{T}_{h}$ gives \eqref{24}.
\end{proof}

\begin{theorem}
If $\bar{f}$ or $g$ in $H^{\infty}(\mathbb{T}^{2})$, and  $\widehat{T}_{f}\widehat{T}_{g}=\widehat{T}_{h},$ then $fg=h.$
\end{theorem}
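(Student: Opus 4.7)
The plan is to use the matrix identity of the preceding lemma to reduce the claim to the well-known property that a Toeplitz operator on $H^{2}(\mathbb{T}^{2})$ whose symbol gives the zero operator must vanish.

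First, I would reduce the case $\bar{f}\in H^{\infty}(\mathbb{T}^{2})$ to the case $g\in H^{\infty}(\mathbb{T}^{2})$ by taking Hilbert-space adjoints. A direct computation from $\widehat{T}_{\phi}u=Q(\phi u)$ together with the self-adjointness of $Q$ gives $\widehat{T}_{\phi}^{*}=\widehat{T}_{\bar\phi}$, so the hypothesis $\widehat{T}_{f}\widehat{T}_{g}=\widehat{T}_{h}$ is equivalent to $\widehat{T}_{\bar g}\widehat{T}_{\bar f}=\widehat{T}_{\bar h}$. Hence, if $\bar{f}\in H^{\infty}(\mathbb{T}^{2})$ we are in the second case with the triple $(f,g,h)$ replaced by $(\bar g,\bar f,\bar h)$, and proving $\bar g\cdot\bar f=\bar h$ is equivalent to $fg=h$. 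From here on I assume $g\in H^{\infty}(\mathbb{T}^{2})$.

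Second, I apply the identity \eqref{24}, namely $T_{h}=T_{f}T_{g}+\Gamma^{*}_{\bar f}\,\Gamma_{g}$. Because $g\in H^{\infty}(\mathbb{T}^{2})$, for every polynomial $u\in H^{2}(\mathbb{T}^{2})$ the product $gu$ again belongs to $H^{2}(\mathbb{T}^{2})$. This immediately yields $\Gamma_{g}u=(Q-P)(gu)=0$, so $\Gamma_{g}=0$ on $H^{2}(\mathbb{T}^{2})$; and it yields $T_{g}u=P(gu)=gu$, whence $T_{f}T_{g}u=P(f\cdot gu)=T_{fg}u$, i.e., $T_{f}T_{g}=T_{fg}$. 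Substituting into \eqref{24} collapses the right-hand side to $T_{fg}$, giving $T_{h-fg}=0$ on $H^{2}(\mathbb{T}^{2})$.

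Third, I invoke the standard fact that $T_{\phi}=0$ on $H^{2}(\mathbb{T}^{2})$ forces $\phi=0$ a.e.\ on $\mathbb{T}^{2}$. Indeed, testing $T_{\phi}(z_{1}^{i}z_{2}^{j})$ against $z_{1}^{a}z_{2}^{b}$ for $a,b,i,j\ge 0$ produces, after a one-line Fourier computation, $\hat\phi(a-i,b-j)=0$; as $(a-i,b-j)$ ranges over all of $\mathbb{Z}^{2}$ when $a,b,i,j$ vary over $\mathbb{Z}_{\ge 0}$, every Fourier coefficient of $\phi$ vanishes. Applying this to $\phi=h-fg\in L^{\infty}(\mathbb{T}^{2})$ gives $h=fg$, as required.

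The argument is essentially routine once the adjoint reduction is in place; the only conceptual subtlety is verifying $\widehat{T}_{\phi}^{*}=\widehat{T}_{\bar\phi}$, after which everything follows mechanically from \eqref{24}. The ``main obstacle,'' such as it is, is the observation that analyticity of $g$ simultaneously kills $\Gamma_{g}$ and collapses $T_{f}T_{g}$ to $T_{fg}$, so that the extra Hankel term in \eqref{24} disappears and leaves only a scalar Toeplitz identity on $H^{2}(\mathbb{T}^{2})$.
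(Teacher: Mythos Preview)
Your proof is correct and follows essentially the same route as the paper: apply identity \eqref{24} and use the analyticity hypothesis to annihilate the Hankel term, reducing to an equality of Toeplitz operators on $H^{2}(\mathbb{T}^{2})$. Two minor differences are worth noting. First, rather than your adjoint reduction, the paper dispatches the case $\bar f\in H^{\infty}(\mathbb{T}^{2})$ directly by observing that then $\Gamma_{\bar f}=0$, so the factor $\Gamma^{*}_{\bar f}$ in \eqref{24} already vanishes; this saves the detour through $\widehat{T}_{\phi}^{*}=\widehat{T}_{\bar\phi}$. Second, once $T_{h}=T_{f}T_{g}$ is obtained, the paper simply invokes \cite[Theorem~3.1]{ding2003products} to conclude $fg=h$, whereas you further simplify $T_{f}T_{g}=T_{fg}$ and then give a self-contained Fourier-coefficient argument for the injectivity of the symbol map. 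Your version is slightly longer but has the advantage of being entirely self-contained.
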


\begin{proof}
 Since the small Hankel operator with an analytic symbol is the zero operator, \eqref{24} implies $T_{h}=T_{f}T_{g}.$
By \cite[Theorem 3.1]{ding2003products}, we have that $fg=h.$
\end{proof}
\noindent{\bf{Remark.}}\,\,\,It is well-known that $T_{\varphi}$ satisfies the following characteristic relation
\[T^{*}_{z_{i}}T_{\varphi}T_{z_{i}}=T_{\varphi}\]for $\varphi\in L^{\infty}(\mathbb{T}^{2}),1\leq i\leq 2.$
But this relation does not extend to the Toeplitz operator $\widehat{T}_{\varphi}$ as the following example shows.
Since $\widehat{T}^{*}_{z_{1}}\widehat{T}_{z_{1}}\widehat{T}_{z_{1}}\bar{z_{1}}\bar{z_{2}}=0$ and $\widehat{T}_{z_{1}}\bar{z_{1}}\bar{z_{2}}=\bar{z_{2}}$,
$\widehat{T}^{*}_{z_{1}}\widehat{T}_{z_{1}}\widehat{T}_{z_{1}}\neq \widehat{T}_{z_{1}}.$

Although our main concern is with bounded operators, we will need to make use of
densely defined unbounded operators. Let $H$ be a Hilbert space. Suppose $A_{1}$ and $A_{2}$ are densely defined operators
in $H$, we say that $A_{1} = A_{2}$ if
\[A_{1}p = A_{2}p\]
for each $p\in dom A_{1}\bigcap dom A_{2}.$
 Let $x$ and $y$ be two nonzero vectors in $H,$ ~$x \otimes y$ is the operator of rank one defined by
\[(x \otimes y)f=\left< f,y\right>x\]
for $f\in H.$

\section{Some Lemmas}
\hspace*{\parindent}
Let $h\in L^{2}(\mathbb{T}^{2}),$ $h=h(z_{1})$ denotes $h$ is a function of one variable $z_{1}.$ Similarly,
 $h=h(z_{2})$ denotes $h$ is a function of one variable $z_{2}.$ We denote the semicommutator and commutator
of two Toeplitz operators $\widehat{T}_{f}$ and $\widehat{T}_{g}$ by
\[(\widehat{T}_{f},\widehat{T}_{g}]=\widehat{T}_{fg}-\widehat{T}_{f}\widehat{T}_{g}\]
and
\[[\widehat{T}_{f},\widehat{T}_{g}]=\widehat{T}_{f}\widehat{T}_{g}-\widehat{T}_{g}\widehat{T}_{f}.\]

The following lemma will be used
in the proof of Theorem \ref{26}.
\begin{lemma}\label{1}
Let $\varphi,\psi\in H^{2}(\mathbb{T})$, $\varphi=\varphi(z_{1}),\psi=\psi(z_{1})$.Then
\[\int_{\mathbb{T}^{2}}([\widehat{T}_{\varphi},\widehat{T}_{\bar{\psi}}]k_{{\lambda}_{1}}k_{{\lambda}_{2}})\overline{k_{{\lambda}_{1}}k_{{\lambda}_{2}}}d\sigma
=|{\lambda}_{2}|^{2}\bigg(\varphi({\lambda}_{1})\overline{\psi({\lambda}_{1})}-
\mathcal{P}[\varphi\bar{\psi}]({\lambda}_{1})\bigg)\]
for all $(\lambda_{1},\lambda_{2})\in\mathbb{D}^{2}.$
\end{lemma}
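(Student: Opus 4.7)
The plan is to interpret the integral as $\langle[\widehat{T}_\varphi,\widehat{T}_{\bar\psi}]\mathbf{k}_\lambda,\mathbf{k}_\lambda\rangle$ (with $\mathbf{k}_\lambda=k_{\lambda_1}k_{\lambda_2}$) and evaluate the two products $\widehat{T}_\varphi\widehat{T}_{\bar\psi}$ and $\widehat{T}_{\bar\psi}\widehat{T}_\varphi$ on $\mathbf{k}_\lambda$ separately. Because $\varphi$ is analytic, $\widehat{T}_\varphi\mathbf{k}_\lambda=\varphi\mathbf{k}_\lambda$, so
\[\langle\widehat{T}_{\bar\psi}\widehat{T}_\varphi\mathbf{k}_\lambda,\mathbf{k}_\lambda\rangle=\int_{\mathbb{T}^2}\varphi\bar\psi\,|\mathbf{k}_\lambda|^2\,d\sigma=\mathcal{P}[\varphi\bar\psi](\lambda_1),\]
the reduction to $\lambda_1$ coming from the fact that $\varphi\bar\psi$ depends only on $z_1$. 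For the other product, use $\widehat{T}_\varphi^*=\widehat{T}_{\bar\varphi}$ to write $\langle\widehat{T}_\varphi\widehat{T}_{\bar\psi}\mathbf{k}_\lambda,\mathbf{k}_\lambda\rangle=\langle Q(\bar\psi\mathbf{k}_\lambda),Q(\bar\varphi\mathbf{k}_\lambda)\rangle$; splitting $Q=P+(Q-P)$ and using \eqref{54} (so $P(\bar\psi\mathbf{k}_\lambda)=\overline{\psi(\lambda_1)}\mathbf{k}_\lambda$) together with the orthogonality $H^2(\mathbb{T}^2)\perp\overline{H^2_0(\mathbb{T}^2)}$, which kills the cross terms, this collapses to $\varphi(\lambda_1)\overline{\psi(\lambda_1)}+\langle(Q-P)(\bar\psi\mathbf{k}_\lambda),(Q-P)(\bar\varphi\mathbf{k}_\lambda)\rangle$.

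The crux is computing $(Q-P)(\bar\psi\mathbf{k}_\lambda)$. Using the identity $Qf=P(f)+P^-(f)-P(f)(0)$ from the excerpt, together with the factorization $P^-=P^-_1P^-_2$ and the calculation $P^-_2k_{\lambda_2}=k_{\lambda_2}(0)=\sqrt{1-|\lambda_2|^2}$, the subtraction of $P(\bar\psi\mathbf{k}_\lambda)(0)=\overline{\psi(\lambda_1)}\sqrt{(1-|\lambda_1|^2)(1-|\lambda_2|^2)}$ exactly removes the $z_1$-constant part of $P^-_1(\bar\psi k_{\lambda_1})$; recognizing the remaining one-variable quantity as $\bar\psi k_{\lambda_1}-P_1(\bar\psi k_{\lambda_1})=(\bar\psi(z_1)-\overline{\psi(\lambda_1)})k_{\lambda_1}(z_1)$, one obtains
\[(Q-P)(\bar\psi\mathbf{k}_\lambda)=\sqrt{1-|\lambda_2|^2}\,\bigl(\bar\psi(z_1)-\overline{\psi(\lambda_1)}\bigr)\,k_{\lambda_1}(z_1),\]
with the analogous formula for $\bar\varphi$. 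Since both factor as $\sqrt{1-|\lambda_2|^2}$ times a function of $z_1$ only, the two-variable inner product decouples, and expanding $(\bar\psi-\overline{\psi(\lambda_1)})(\varphi-\varphi(\lambda_1))\,|k_{\lambda_1}|^2$ and integrating over $\mathbb{T}$ — using $\mathcal{P}[\varphi](\lambda_1)=\varphi(\lambda_1)$ and $\mathcal{P}[\bar\psi](\lambda_1)=\overline{\psi(\lambda_1)}$ — yields $(1-|\lambda_2|^2)\bigl(\mathcal{P}[\varphi\bar\psi](\lambda_1)-\varphi(\lambda_1)\overline{\psi(\lambda_1)}\bigr)$.

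Assembling everything, with $X=\varphi(\lambda_1)\overline{\psi(\lambda_1)}$ and $Y=\mathcal{P}[\varphi\bar\psi](\lambda_1)$,
\[\langle[\widehat{T}_\varphi,\widehat{T}_{\bar\psi}]\mathbf{k}_\lambda,\mathbf{k}_\lambda\rangle=\bigl(X+(1-|\lambda_2|^2)(Y-X)\bigr)-Y=|\lambda_2|^2(X-Y),\]
which is the claim. The main obstacle is the central computation: the Hardy projection $P$ factors as $P_1P_2$, but the pluriharmonic projection $Q$ does \emph{not} factor across the two variables (since $h^2(\mathbb{T}^2)\ne H^2(\mathbb{T})\otimes H^2(\mathbb{T})$), so one must separately track the mixed monomials $\bar z_1^jz_2^n$ with $j,n\ge 1$, which lie outside $h^2(\mathbb{T}^2)$ and are precisely what $Q$ removes. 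The "missing mass" contributed by removing these cross terms is exactly the factor $1-|\lambda_2|^2$ that, after cancellation against the contribution of $\widehat{T}_{\bar\psi}\widehat{T}_\varphi$, produces the final $|\lambda_2|^2$ in the statement.
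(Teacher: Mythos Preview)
Your proof is correct and follows essentially the same route as the paper. Both arguments reduce $\langle\widehat{T}_{\bar\psi}\widehat{T}_\varphi\mathbf{k}_\lambda,\mathbf{k}_\lambda\rangle$ to $\mathcal{P}[\varphi\bar\psi](\lambda_1)$ via $\widehat{T}_\varphi\mathbf{k}_\lambda=\varphi\mathbf{k}_\lambda$, and both compute $\langle\widehat{T}_\varphi\widehat{T}_{\bar\psi}\mathbf{k}_\lambda,\mathbf{k}_\lambda\rangle=\langle\widehat{T}_{\bar\psi}\mathbf{k}_\lambda,\widehat{T}_{\bar\varphi}\mathbf{k}_\lambda\rangle$ by splitting $\widehat{T}_{\bar\psi}\mathbf{k}_\lambda$ into the orthogonal pieces $\overline{\psi(\lambda_1)}\mathbf{k}_\lambda\in H^2(\mathbb{T}^2)$ and $\sqrt{1-|\lambda_2|^2}\,(\bar\psi-\overline{\psi(\lambda_1)})k_{\lambda_1}\in\overline{H^2_0(\mathbb{T}^2)}$, which is exactly your $P/(Q-P)$ decomposition; the paper simply packages this as an explicit formula for $\widehat{T}_{\bar\psi}\mathbf{k}_\lambda$ (its equation \eqref{55}) before taking the inner product. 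The only cosmetic difference is that the paper, mindful that $\varphi,\psi$ are merely in $H^2$ so the products of Toeplitz operators need not be bounded, justifies the adjoint manipulation $\langle\widehat{T}_\varphi h,\mathbf{k}_\lambda\rangle=\langle h,\widehat{T}_{\bar\varphi}\mathbf{k}_\lambda\rangle$ by unwinding the integral representation \eqref{45} of $Q$ rather than invoking $\widehat{T}_\varphi^*=\widehat{T}_{\bar\varphi}$ directly; your argument is fine here since $\mathbf{k}_\lambda$ is bounded and each intermediate vector lands in $h^2(\mathbb{T}^2)$.
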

\begin{proof}Since $\bar{\psi}k_{{\lambda}_{1}}\in L^{2}(\mathbb{T}),$ we have
\begin{equation*}
\begin{split}
\bar{\psi}k_{{\lambda}_{1}}
&=P_{1}(\bar{\psi}k_{{\lambda}_{1}})+P^{-}_{1}(\bar{\psi}k_{{\lambda}_{1}})-P_{1}(\bar{\psi}k_{{\lambda}_{1}})(0)\\
&=\bar{\psi}({\lambda}_{1})k_{{\lambda}_{1}}+P^{-}_{1}(\bar{\psi}k_{{\lambda}_{1}})
-\bar{\psi}({\lambda}_{1})\sqrt{1-|{\lambda}_{1}|^{2}}.\\
\end{split}
\end{equation*}
Hence
\begin{equation}\label{53}
\begin{split}
P^{-}_{1}(\bar{\psi}k_{{\lambda}_{1}})=\bar{\psi}k_{{\lambda}_{1}}-\bar{\psi}({\lambda}_{1})k_{{\lambda}_{1}}
+\bar{\psi}({\lambda}_{1})\sqrt{1-|{\lambda}_{1}|^{2}}.
\end{split}
\end{equation}

An easy computation gives
\begin{equation}\label{55}
\begin{split}
\widehat{T}_{\bar{\psi}}k_{{\lambda}_{1}}k_{{\lambda}_{2}}
=&~P(\bar{\psi}k_{{\lambda}_{1}}k_{{\lambda}_{2}})+P^{-}(\bar{\psi}k_{{\lambda}_{1}}k_{{\lambda}_{2}})
-P(\bar{\psi}k_{{\lambda}_{1}}k_{{\lambda}_{2}})(0)\\
=&~P_{1}(\bar{\psi}k_{{\lambda}_{1}})P_{2}(k_{{\lambda}_{2}})+P_{1}^{-}(\bar{\psi}k_{{\lambda}_{1}})P_{2}^{-}(k_{{\lambda}_{2}})
-P(\bar{\psi}k_{{\lambda}_{1}}k_{{\lambda}_{2}})(0)\\
=&~\bar{\psi}({\lambda}_{1})k_{{\lambda}_{1}}k_{{\lambda}_{2}}+P^{-}_{1}(\bar{\psi}k_{{\lambda}_{1}})\sqrt{1-|{\lambda}_{2}|^{2}}
-\bar{\psi}({\lambda}_{1})\sqrt{1-|{\lambda}_{1}|^{2}}\sqrt{1-|{\lambda}_{2}|^{2}}\\
=&~\bar{\psi}({\lambda}_{1})k_{{\lambda}_{1}}k_{{\lambda}_{2}}+\bigg(\bar{\psi}k_{{\lambda}_{1}}-\bar{\psi}({\lambda}_{1})k_{{\lambda}_{1}}
+\bar{\psi}({\lambda}_{1})\sqrt{1-|{\lambda}_{1}|^{2}}\bigg)\sqrt{1-|{\lambda}_{2}|^{2}}\\
&-\bar{\psi}({\lambda}_{1})\sqrt{1-|{\lambda}_{1}|^{2}}\sqrt{1-|{\lambda}_{2}|^{2}}\\
=&~\bar{\psi}({\lambda}_{1})k_{{\lambda}_{1}}k_{{\lambda}_{2}}+
\bigg(\bar{\psi}-\bar{\psi}({\lambda}_{1})\bigg)k_{{\lambda}_{1}}\sqrt{1-|{\lambda}_{2}|^{2}}.
\end{split}
\end{equation}
The second equality follows from that $P=P_{1}P_{2}=P_{2}P_{1}$ and $P^{-}=P^{-}_{1}P^{-}_{2}=P^{-}_{2}P^{-}_{1},$
the third equality follows from \eqref{54},
the fourth equality follows from \eqref{53}.
Thus
\begin{equation}\label{46}
\begin{split}
\int_{\mathbb{T}^{2}}&(\widehat{T}_{\varphi}\widehat{T}_{\bar{\psi}}k_{{\lambda}_{1}}k_{{\lambda}_{2}})\overline{k_{{\lambda}_{1}}k_{{\lambda}_{2}}}d\sigma\\
=&\int_{\mathbb{T}^{2}}(Q{\varphi}\widehat{T}_{\bar{\psi}}k_{{\lambda}_{1}}k_{{\lambda}_{2}})\overline{k_{{\lambda}_{1}}k_{{\lambda}_{2}}}d\sigma\\
=&\int_{\mathbb{T}^{2}}\int_{\mathbb{T}^{2}}\varphi(z_{1})(\widehat{T}_{\bar{\psi}}k_{{\lambda}_{1}}k_{{\lambda}_{2}})(z)
\mathbf{R}_{\eta}(z)d\sigma(z)\overline{k_{{\lambda}_{1}}(\eta_{1})k_{{\lambda}_{2}}(\eta_{2})}
d\sigma(\eta)\\
=&\int_{\mathbb{T}^{2}}\varphi(z_{1})(\widehat{T}_{\bar{\psi}}k_{{\lambda}_{1}}k_{{\lambda}_{2}})(z)\int_{\mathbb{T}^{2}}
\mathbf{R}_{z}(\eta)\overline{k_{{\lambda}_{1}}(\eta_{1})k_{{\lambda}_{2}}(\eta_{2})}
d\sigma(\eta)d\sigma(z)\\
=&\int_{\mathbb{T}^{2}}{\varphi}(\widehat{T}_{\bar{\psi}}k_{{\lambda}_{1}}k_{{\lambda}_{2}})\overline{k_{{\lambda}_{1}}k_{{\lambda}_{2}}}d\sigma(z)\\
=&\left<\widehat{T}_{\bar{\psi}}k_{{\lambda}_{1}}k_{{\lambda}_{2}},\bar{\varphi}k_{{\lambda}_{1}}k_{{\lambda}_{2}}\right>\\ =&\left<\widehat{T}_{\bar{\psi}}k_{{\lambda}_{1}}k_{{\lambda}_{2}},\widehat{T}_{\bar{\varphi}}k_{{\lambda}_{1}}k_{{\lambda}_{2}}\right>\\
=&\left<\bar{\psi}({\lambda}_{1})k_{{\lambda}_{1}}k_{{\lambda}_{2}},\bar{\varphi}({\lambda}_{1})k_{{\lambda}_{1}}k_{{\lambda}_{2}}\right>\\
&+\left<\bigg(\bar{\psi}-\bar{\psi}({\lambda}_{1})\bigg)k_{{\lambda}_{1}}\sqrt{1-|{\lambda}_{2}|^{2}}
,\bigg(\bar{\varphi}-\bar{\varphi}({\lambda}_{1})\bigg)k_{{\lambda}_{1}}\sqrt{1-|{\lambda}_{2}|^{2}}\right>\\
=&~\varphi({\lambda}_{1})\bar{\psi}({\lambda}_{1})+(1-|{\lambda}_{2}|^{2})\bigg(\left<\bar{\psi}k_{{\lambda}_{1}},
\bar{\varphi}k_{{\lambda}_{1}}\right>-\left<\bar{\psi}k_{{\lambda}_{1}},\bar{\varphi}({\lambda}_{1})k_{{\lambda}_{1}}\right>\bigg)\\
=&~\varphi({\lambda}_{1})\bar{\psi}({\lambda}_{1})+(1-|{\lambda}_{2}|^{2})\bigg(
\mathcal{P}[\varphi\bar{\psi}]({\lambda}_{1})-\varphi({\lambda}_{1})\bar{\psi}({\lambda}_{1})\bigg)\\
=&~|{\lambda}_{2}|^{2}\varphi({\lambda}_{1})\bar{\psi}({\lambda}_{1})+(1-|{\lambda}_{2}|^{2})
\mathcal{P}[\varphi\bar{\psi}]({\lambda}_{1}).\\
\end{split}
\end{equation}
The second equality follows from \eqref{45},
the seventh equality follows from \eqref{55}.

On the other hand, we get
\begin{equation}\label{47}
\begin{split}
\int_{\mathbb{T}^{2}}&(\widehat{T}_{\bar{\psi}}\widehat{T}_{\varphi}k_{{\lambda}_{1}}k_{{\lambda}_{2}})\overline{k_{{\lambda}_{1}}k_{{\lambda}_{2}}}d\sigma\\
=&\int_{\mathbb{T}^{2}}(Q\bar{\psi}Q{\varphi}k_{{\lambda}_{1}}k_{{\lambda}_{2}})\overline{k_{{\lambda}_{1}}k_{{\lambda}_{2}}}d\sigma\\
=&\int_{\mathbb{T}^{2}}(Q\bar{\psi}{\varphi}k_{{\lambda}_{1}}k_{{\lambda}_{2}})\overline{k_{{\lambda}_{1}}k_{{\lambda}_{2}}}d\sigma\\
=&\int_{\mathbb{T}^{2}}\int_{\mathbb{T}^{2}}(\bar{\psi}{\varphi}k_{{\lambda}_{1}}k_{{\lambda}_{2}})(z)
\mathbf{R}_{\eta}(z)d\sigma(z)\overline{k_{{\lambda}_{1}}(\eta_{1})k_{{\lambda}_{2}}(\eta_{2})}
d\sigma(\eta_{1},\eta_{2})\\
=&\int_{\mathbb{T}^{2}}(\bar{\psi}{\varphi}k_{{\lambda}_{1}}k_{{\lambda}_{2}})(z)\int_{\mathbb{T}^{2}}
\mathbf{R}_{z}(\eta)\overline{k_{{\lambda}_{1}}(\eta_{1})k_{{\lambda}_{2}}(\eta_{2})}
d\sigma(\eta)d\sigma(z)\\
=&\int_{\mathbb{T}^{2}}(\bar{\psi}{\varphi}k_{{\lambda}_{1}}k_{{\lambda}_{2}})\overline{k_{{\lambda}_{1}}k_{{\lambda}_{2}}}d\sigma\\
=&\left<\bar{\psi}{\varphi}k_{{\lambda}_{1}}k_{{\lambda}_{2}},
k_{{\lambda}_{1}}k_{{\lambda}_{2}}\right>=\mathcal{P}[\varphi\bar{\psi}]({\lambda}_{1}).\\
\end{split}
\end{equation}
The third equality follows from \eqref{45}.

So combining \eqref{46} with \eqref{47}, we have
\begin{equation*}
\begin{split}
\int_{\mathbb{T}^{2}}([\widehat{T}_{\varphi},\widehat{T}_{\bar{\psi}}]k_{{\lambda}_{1}}k_{{\lambda}_{2}})\overline{k_{{\lambda}_{1}}k_{{\lambda}_{2}}}d\sigma
=|{\lambda}_{2}|^{2}\bigg(\varphi({\lambda}_{1})\bar{\psi}({\lambda}_{1})-
\mathcal{P}[\varphi\bar{\psi}]({\lambda}_{1})\bigg).
\end{split}
\end{equation*}
\end{proof}

\noindent{\bf{Remark.}}\,\,\,Although $\widehat{T}_{\varphi}\widehat{T}_{\bar{\psi}}$
and $\widehat{T}_{\bar{\psi}}\widehat{T}_{\varphi}$
may not be bounded, the integral formula \eqref{52} of their Berezin transform  still make sense.

In the following four lemmas, $u$ denotes a polynomial in $ H^{2}(\mathbb{T}^{2}),$
$v$ denotes a polynomial in $ H^{2}_{0}(\mathbb{T}^{2}).$
\begin{lemma}\label{4}
If $\varphi$ and $\psi$ are functions in $ H^{2}(\mathbb{T}^{2}),$ the following equalities hold.
\begin{equation*}
\begin{split}
S_{\varphi}{\Gamma_{\bar{\psi}}}&=\Gamma_{\bar{\psi}}T_{\varphi}=\Gamma_{\varphi\bar{\psi}},\\
T_{\bar{\varphi}}{\Gamma^{*}_{\bar{\psi}}}&={\Gamma^{*}_{\bar{\psi}}}{S_{\bar{\varphi}}}=\Gamma^{*}_{\varphi\bar{\psi}}.
\end{split}
\end{equation*}

\end{lemma}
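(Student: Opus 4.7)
The plan is to reduce all four identities to a pair of elementary invariance properties of multiplication operators on $L^{2}(\mathbb{T}^{2})$. First I will record the key fact that $Q-P$ is the orthogonal projection of $L^{2}(\mathbb{T}^{2})$ onto $\overline{H^{2}_{0}(\mathbb{T}^{2})}$; this follows from the orthogonal decomposition $h^{2}=H^{2}\oplus\overline{H^{2}_{0}}$ that underlies the matrix representation \eqref{36} (orthogonality is immediate from $\langle f,\bar g\rangle = (fg)(0)=0$ for $f\in H^{2}$ and $g\in H^{2}_{0}$). With this in hand, each of the five operators appearing in the lemma is just multiplication by the appropriate symbol followed by either $P$ or $Q-P$.

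Next I will dispose of the two identities in which the composition telescopes directly. For $\Gamma_{\bar\psi}T_{\varphi}u=\Gamma_{\varphi\bar\psi}u$, the point is that $T_{\varphi}u=P(\varphi u)=\varphi u$ whenever $\varphi, u\in H^{2}$. For $\Gamma^{*}_{\bar\psi}S_{\bar\varphi}\bar v=\Gamma^{*}_{\varphi\bar\psi}\bar v$, I note that $\varphi v\in H^{2}_{0}$ whenever $\varphi\in H^{2}$ and $v\in H^{2}_{0}$, so $\bar\varphi\bar v\in\overline{H^{2}_{0}}$; since $Q-P$ fixes $\overline{H^{2}_{0}}$, this gives $S_{\bar\varphi}\bar v=\bar\varphi\bar v$, and applying $\Gamma^{*}_{\bar\psi}$ and unwinding the definition yields $P(\psi\bar\varphi\bar v)=P(\bar\varphi\psi\bar v)=\Gamma^{*}_{\varphi\bar\psi}\bar v$.

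The remaining two identities $S_{\varphi}\Gamma_{\bar\psi}u=\Gamma_{\varphi\bar\psi}u$ and $T_{\bar\varphi}\Gamma^{*}_{\bar\psi}\bar v=\Gamma^{*}_{\varphi\bar\psi}\bar v$ both reduce, after subtracting the right-hand side from the left, to showing that an expression of the form $(Q-P)(\varphi\cdot h)$ or $P(\bar\varphi\cdot h)$ vanishes, where $h=\bar\psi u-(Q-P)(\bar\psi u)$ lies in $L^{2}\ominus\overline{H^{2}_{0}}$ in the first case, and $h=(I-P)(\psi\bar v)$ lies in $L^{2}\ominus H^{2}$ in the second. The crucial auxiliary fact I need, and the step I expect to be the main obstacle, is the invariance claim: for $\varphi\in H^{2}(\mathbb{T}^{2})$, multiplication by $\varphi$ preserves $L^{2}\ominus\overline{H^{2}_{0}}$, and dually, multiplication by $\bar\varphi$ preserves $L^{2}\ominus H^{2}$.

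I will verify this invariance by expanding in the Fourier basis $\{z_{1}^{m}z_{2}^{n}:(m,n)\in\mathbb{Z}^{2}\}$ of $L^{2}(\mathbb{T}^{2})$, using the identifications that $H^{2}$ corresponds to $m,n\geq 0$ and $\overline{H^{2}_{0}}$ to $m,n\leq 0$ with $(m,n)\neq(0,0)$. A basis element of $L^{2}\ominus\overline{H^{2}_{0}}$ is either the constant or satisfies $m>0$ or $n>0$; multiplying by $z_{1}^{a}z_{2}^{b}$ with $a,b\geq 0$ preserves each of these conditions, and the case $L^{2}\ominus H^{2}$ is entirely analogous under conjugation. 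Once these invariance properties are established, the two difference expressions vanish termwise against $\overline{H^{2}_{0}}$ and $H^{2}$ respectively, and all four identities of the lemma follow.
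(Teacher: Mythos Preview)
Your argument is correct. The two ``telescoping'' identities $\Gamma_{\bar\psi}T_{\varphi}=\Gamma_{\varphi\bar\psi}$ and $\Gamma^{*}_{\bar\psi}S_{\bar\varphi}=\Gamma^{*}_{\varphi\bar\psi}$ are handled exactly as in the paper, by observing that $\varphi u\in H^{2}$ and $\overline{\varphi v}\in\overline{H^{2}_{0}}$ are already fixed by the inner projection.

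For the remaining two identities your route differs from the paper's in presentation, though the underlying mechanism is the same. The paper evaluates both sides pointwise via the integral representations \eqref{30} and \eqref{45}, then moves the factor $\varphi$ onto the test function and uses that $\overline{\varphi(\mathbf{K}_{z}-1)}\in\overline{H^{2}_{0}}$ (resp.\ $\varphi\mathbf{K}_{z}\in H^{2}$) to drop the inner projection from the inner product. You instead subtract and reduce to $(Q-P)(\varphi h)=0$ for $h\in L^{2}\ominus\overline{H^{2}_{0}}$ (resp.\ $P(\bar\varphi h)=0$ for $h\in L^{2}\ominus H^{2}$), and verify these vanishings by showing on the Fourier side that multiplication by $\varphi$ preserves the relevant complement. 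This is the dual formulation of the paper's step: your claim that $\widehat{\varphi h}(p,q)=0$ for $(p,q)$ in the $\overline{H^{2}_{0}}$-index set is precisely $\langle h,\overline{\varphi z_{1}^{-p}z_{2}^{-q}}\rangle=0$, which is the paper's kernel computation with $\mathbf{K}_{z}-1$ replaced by a monomial. Your version is slightly more elementary in that it avoids introducing the reproducing kernels explicitly; the paper's version makes the $L^{1}$-extension of the projections transparent, which is relevant since $\varphi h$ need only lie in $L^{1}$ when $\varphi\in H^{2}$ is unbounded.
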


\begin{proof}
An easy computation gives
\begin{equation*}
\begin{split}
\Gamma_{\bar{\psi}}T_{\varphi}u=&(Q-P)\bar{\psi}P\varphi u\\
=&(Q-P)\bar{\psi}\varphi u=\Gamma_{\varphi\bar{\psi}}u.
\end{split}
\end{equation*}
The third equality follows from the fact that $\varphi u\in H^{2}(\mathbb{T}^{2}).$

Using the integral representations of $P$ (see \eqref{30}) and $Q$ (see \eqref{45}), we have
\begin{equation*}
\begin{split}
(S_{\varphi}{\Gamma_{\bar{\psi}}}u)(z)&=(Q-P)(\varphi\Gamma_{\bar{\psi}}u)(z)\\
&=\int_{\mathbb{T}^{2}}\varphi\Gamma_{\bar{\psi}}u(\mathbf{K}_{z}-1)d\sigma\\
&=\left< {\Gamma_{\bar{\psi}}}u,\overline{\varphi(\mathbf{K}_{z}-1)}\right >\\
&=\left< (Q-P)(\bar{\psi}u),\overline{\varphi(\mathbf{K}_{z}-1)}\right >\\
&=\left< \bar{\psi}u,\overline{\varphi(\mathbf{K}_{z}-1)}\right >\\
&=\int_{\mathbb{T}^{2}}\varphi{\bar{\psi}}u(\mathbf{K}_{z}-1)d\sigma\\
&=(Q-P)(\varphi\bar{\psi}u)(z)=(\Gamma_{\varphi\bar{\psi}}u)(z).\\
\end{split}
\end{equation*}
The fifth equality follows from the fact that $\overline{\varphi(\mathbf{K}_{z}-1)}\in \overline{H^{2}_{0}(\mathbb{T}^{2})}.$
An easy computation gives
\begin{equation*}
\begin{split}
{\Gamma^{*}_{\bar{\psi}}}{S_{\bar{\varphi}}}\bar{v}&=P\psi(Q-P)\overline{\varphi v}\\
&=P(\psi\overline{\varphi v})=\Gamma^{*}_{\varphi\bar{\psi}}\bar{v}.
\end{split}
\end{equation*}
The third equality follows from the fact that $\overline{\varphi v}\in \overline{H^{2}_{0}(\mathbb{T}^{2})}.$

Using the integral representation of $P$ (see \eqref{30}), we get
\begin{equation*}
\begin{split}
(T_{\bar{\varphi}}{\Gamma^{*}_{\bar{\psi}}}\bar{v})(z)&=P(\bar{\varphi}\Gamma^{*}_{\bar{\psi}}\bar{v})(z)\\
&=\int_{\mathbb{T}^{2}}\bar{\varphi}\Gamma^{*}_{\bar{\psi}}\bar{v}\overline{\mathbf{K}_{z}}d\sigma\\
&=\left< \Gamma^{*}_{\bar{\psi}}\bar{v},\varphi\mathbf{K}_{z}\right >\\
&=\left< P(\psi\bar{v}),\varphi\mathbf{K}_{z}\right >\\
&=\left< \psi\bar{v},\varphi\mathbf{K}_{z}\right >\\
&=\int_{\mathbb{T}^{2}}\bar{\varphi}\psi\bar{v}\overline{\mathbf{K}_{z}}d\sigma\\
&=P(\bar{\varphi}\psi\bar{v})(z)=(\Gamma^{*}_{\varphi\bar{\psi}}\bar{v})(z).\\
\end{split}
\end{equation*}
The fifth equality follows from the fact that $\varphi\mathbf{K}_{z}\in H^{2}(\mathbb{T}^{2}).$
\end{proof}
The following three lemmas play an important role in understanding (semi)commutativity of Toeplitz operators on
$h^{2}(\mathbb{T}^{2}).$
\begin{lemma}\label{2}
Let $\varphi,\psi\in H^{2}(\mathbb{T}^{2})$. The following statements are equivalent.

~~~~~~~~~~~~(a) $\widehat{T}_{\varphi}\widehat{T}_{\psi}=\widehat{T}_{\varphi \psi}.$

~~~~~~~~~~~~(b) $T_{\varphi}{\Gamma^{*}_{\bar{\psi}}}+{\Gamma^{*}_{\bar{\varphi}}}{S_{\psi}}=\Gamma^{*}_{\overline{\varphi \psi}}.$

~~~~~~~~~~~~(c) $\widehat{T}_{\bar{\psi}}\widehat{T}_{\bar{\varphi}}=\widehat{T}_{\overline{\varphi \psi}}.$

~~~~~~~~~~~~(d) ${\Gamma_{\bar{\psi}}}T_{\bar{\varphi}}+{S_{\bar{\psi}}}{\Gamma_{\bar{\varphi}}}=\Gamma_{\overline{\varphi \psi}}.$

~~~~~~~~~~~~(e) ~(e1) Both $\varphi$ and $\psi$ are functions of one variable $z_{1};$ or

~~~~~~~~~~~~~~~~~(e2)  Both $\varphi$ and $\psi$ are functions of one variable $z_{2};$ or

~~~~~~~~~~~~~~~~~(e3) Either $\varphi$ or $\psi$ is constant.
\end{lemma}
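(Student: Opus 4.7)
The plan is to traverse the equivalences via the chain (a)$\Leftrightarrow$(b)$\Leftrightarrow$(d)$\Leftrightarrow$(c)$\Leftrightarrow$(e): first use the block matrix representation \eqref{36}--\eqref{37} to decouple (a) into (b), do the same for (c) into (d), then match (b) with (d) by taking formal adjoints, and finally invoke Theorem~\ref{02} to obtain (e).

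For (a)$\Leftrightarrow$(b): since $\varphi,\psi\in H^2(\mathbb{T}^2)$, the analyticity of $\varphi h$ and $\psi h$ on polynomials forces $\Gamma_\varphi=\Gamma_\psi=0$, so under $h^2(\mathbb{T}^2)=H^2(\mathbb{T}^2)\oplus\overline{H^2_0(\mathbb{T}^2)}$ the matrices of $\widehat{T}_\varphi$ and $\widehat{T}_\psi$ are upper-triangular. Multiplying per \eqref{37} and equating with the also-upper-triangular matrix of $\widehat{T}_{\varphi\psi}$ yields three scalar block equations: $T_\varphi T_\psi=T_{\varphi\psi}$, the off-diagonal identity (b), and $S_\varphi S_\psi=S_{\varphi\psi}$. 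The first is the classical composition rule for analytic-symbol Toeplitz operators on the Hardy space of the bidisk. The third I would verify directly by writing $Q-P=P^--E_0$ (where $E_0 f = Pf(0)$) and observing that, for analytic $\varphi$ and any $g\in(\overline{H^2(\mathbb{T}^2)})^{\perp}$, both the integral defining $P^-(\varphi g)$ in \eqref{31} and the constant $P(\varphi g)(0)$ reduce, after pushing $\varphi\mathbf{K}_\lambda$ and $\bar\varphi$ into $H^2(\mathbb{T}^2)$ and $\overline{H^2(\mathbb{T}^2)}$ respectively, to pairings of $g$ against elements of $\overline{H^2(\mathbb{T}^2)}$, hence vanish; applying this with $g=(I-P^-)(\psi\bar v)$ delivers the identity. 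Hence (a) collapses to precisely (b).

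For (c)$\Leftrightarrow$(d) the argument is symmetric: now $\Gamma^*_\varphi\bar v=P(\bar\varphi\bar v)=0$ because $\bar\varphi\bar v\in\overline{H^2_0(\mathbb{T}^2)}$, so $\widehat{T}_{\bar\varphi}$ and $\widehat{T}_{\bar\psi}$ are lower-triangular and the only non-trivial entry equation in $\widehat{T}_{\bar\psi}\widehat{T}_{\bar\varphi}=\widehat{T}_{\overline{\varphi\psi}}$ is exactly (d). To match (b) with (d) I would take formal adjoints: the pairings $T_\varphi^{*}=T_{\bar\varphi}$, $(\Gamma^{*}_{\bar\psi})^{*}=\Gamma_{\bar\psi}$, and $S_\psi^{*}=S_{\bar\psi}$ are each a one-line inner-product check (using that $Q-P$ and $P$ are self-adjoint projections onto $\overline{H^2_0(\mathbb{T}^2)}$ and $H^2(\mathbb{T}^2)$ respectively), and the adjoint of the identity in (b) is precisely the identity in (d). This closes (a)$\Leftrightarrow$(b)$\Leftrightarrow$(d)$\Leftrightarrow$(c), and (a)$\Leftrightarrow$(e) is then a direct citation of Theorem~\ref{02}.

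The main obstacle is the deceptively simple ``automatic'' diagonal identity $S_\varphi S_\psi=S_{\varphi\psi}$ for analytic symbols. Unlike the Hardy-projection identity $T_\varphi T_\psi=T_{\varphi\psi}$, the operator $S_\varphi$ is defined through the non-multiplicative projection $Q-P$, so the identity is not a formal consequence of $\varphi\psi$ being the composite symbol; one must explicitly trace how multiplication by an analytic function interacts with $P^-$ and the point evaluation at $0$, which is where the integral representations \eqref{30}, \eqref{31}, \eqref{45} become essential. A minor secondary issue, if $\varphi,\psi$ are not assumed bounded, is that all compositions should be interpreted on the dense subspace of polynomials, consistent with the densely-defined framework set up in Section~2.
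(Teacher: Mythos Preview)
Your proposal is correct, and the block-matrix reductions (a)$\Leftrightarrow$(b) and (c)$\Leftrightarrow$(d), together with the citation of Theorem~\ref{02} for (a)$\Leftrightarrow$(e), match the paper exactly. The genuine difference is in how you bridge the analytic and conjugate-analytic halves: you pass from (b) to (d) by taking formal adjoints, whereas the paper proves (a)$\Leftrightarrow$(c) directly via the reproducing-kernel identity
\[
(\widehat{T}_{\varphi}\widehat{T}_{\psi}\mathbf{R}_{\eta})(z)=\overline{(\widehat{T}_{\bar{\psi}}\widehat{T}_{\bar{\varphi}}\mathbf{R}_{z})(\eta)},
\]
obtained by unwinding \eqref{45} and using that each $\mathbf{R}_\eta$ is real-valued. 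Your adjoint route is slightly shorter in the bounded case, but in the $H^{2}$ (possibly unbounded) setting it forces you to justify that compositions such as $\Gamma_{\bar\psi}T_{\bar\varphi}$ make sense on polynomials and that the pairing identities $T_\varphi^{*}=T_{\bar\varphi}$, $S_\psi^{*}=S_{\bar\psi}$, $(\Gamma^{*}_{\bar\psi})^{*}=\Gamma_{\bar\psi}$ survive; the paper's kernel argument sidesteps this by working pointwise with the integral representations. A second, minor difference: your verification of $S_\varphi S_\psi=S_{\varphi\psi}$ via the decomposition $Q-P=P^{-}-E_0$ and the vanishing of $(Q-P)(\varphi g)$ for $g\perp\overline{H^{2}}$ is correct but more circuitous than the paper's one-line computation, which simply pairs $S_\psi\bar v$ against $\overline{\varphi(\mathbf{K}_z-1)}\in\overline{H^{2}_{0}}$ and drops the $(Q-P)$.
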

\begin{proof}
$(a)\Leftrightarrow(b)$: We use  the matrix representations of $\widehat{T}_{\varphi}\widehat{T}_{\psi}$ (see \eqref{37})
and $\widehat{T}_{\varphi \psi}$ (see \eqref{36}) to obtain
\begin{gather*}
\widehat{T}_{\varphi}\widehat{T}_{\psi}=\begin{bmatrix}T_{\varphi}T_{\psi} & T_{\varphi}{\Gamma^{*}_{\bar{\psi}}}+{\Gamma^{*}_{\bar{\varphi}}}{S_{\psi}}\\
0 & S_{\varphi}S_{\psi}\end{bmatrix}~\text{and}~~
\widehat{T}_{\varphi \psi}=\begin{bmatrix} T_{\varphi \psi} & {\Gamma^{*}_{\overline{\varphi \psi}}}\\
   0 &  S_{\varphi \psi}\end{bmatrix}.
\end{gather*}
Since
\begin{equation}\label{57}
\begin{split}
T_{\varphi}T_{\psi}u=&P\varphi P \psi u\\
=&P\varphi \psi u=T_{\varphi \psi}u
\end{split}
\end{equation}
and
\begin{equation}\label{33}
\begin{split}
(S_{\varphi}S_{\psi}\bar{v})(z)&=(Q-P)({\varphi}S_{\psi}\bar{v})(z)\\
&=\int_{\mathbb{T}^{2}}{\varphi}S_{\psi}\bar{v}(\mathbf{K}_{z}-1)d\sigma\\
&=\left< S_{\psi}\bar{v},\overline{{\varphi}(\mathbf{K}_{z}-1)}\right >\\
&=\left< (Q-P)(\psi\bar{v}),\overline{{\varphi}(\mathbf{K}_{z}-1)}\right >\\
&=\left< \psi\bar{v},\overline{{\varphi}(\mathbf{K}_{z}-1)}\right >\\
&=\int_{\mathbb{T}^{2}}{\varphi}\psi\bar{v}(\mathbf{K}_{z}-1)d\sigma\\
&=(Q-P)({\varphi}\psi\bar{v})(z)\\
&=(S_{{\varphi}\psi}\bar{v})(z),
\end{split}
\end{equation}
that is
\begin{equation}
\begin{split}
T_{\varphi}T_{\psi}=T_{\varphi \psi}~~\text{and}~~
S_{\varphi}S_{\psi}=S_{\varphi \psi}.
\end{split}
\end{equation}
Theeqrefore $\widehat{T}_{\varphi}\widehat{T}_{\psi}=\widehat{T}_{\varphi \psi}$ is equivalent to
$T_{\varphi}{\Gamma^{*}_{\bar{\psi}}}+{\Gamma^{*}_{\bar{\varphi}}}{S_{\psi}}=\Gamma^{*}_{\overline{\varphi \psi}}.$

$(c)\Leftrightarrow(d)$: Using the matrix representations of $\widehat{T}_{\bar{\psi}}\widehat{T}_{\bar{\varphi}}$
and $\widehat{T}_{\overline{\varphi \psi}}$ , we have
\begin{gather*}
\widehat{T}_{\bar{\psi}}\widehat{T}_{\bar{\varphi}}=\begin{bmatrix}T_{\bar{\psi}}T_{\bar{\varphi}} & 0\\
{\Gamma_{\bar{\psi}}}T_{\bar{\varphi}}+{S_{\bar{\psi}}}{\Gamma_{\bar{\varphi}}} & S_{\bar{\psi}}S_{\bar{\varphi}}\end{bmatrix}~\text{and}~~
\widehat{T}_{\overline{\varphi \psi}}=\begin{bmatrix} T_{\overline{\varphi \psi}} & 0\\
   \Gamma_{\overline{\varphi \psi}} &  S_{\overline{\varphi \psi}}\end{bmatrix}.
\end{gather*}

Observe that
\begin{equation}\label{35}
\begin{split}
S_{\bar{\psi}}S_{\bar{\varphi}}\bar{v}=&(Q-P)\bar{\psi}(Q-P)\overline{\varphi v}\\
=&(Q-P)\overline{\psi \varphi v}=S_{\overline{\varphi \psi}}\bar{v}
\end{split}
\end{equation}
and
\begin{equation}\label{34}
\begin{split}
(T_{\bar{\psi}}T_{\bar{\varphi}}u)(z)&=P(\bar{\psi}P\bar{\varphi}u)(z)\\
&=\int_{\mathbb{T}^{2}}\bar{\psi}P(\bar{\varphi}u)\overline{\mathbf{K}_{z}}d\sigma\\
&=\left< P(\bar{\varphi}u),\psi\mathbf{K}_{z}\right >\\
&=\left< \bar{\varphi}u,\psi\mathbf{K}_{z}\right >\\
&=\int_{\mathbb{T}^{2}} \bar{\varphi}u\overline{\psi\mathbf{K}_{z}}d\sigma\\
&=\int_{\mathbb{T}^{2}} \overline{\varphi \psi}u\overline{\mathbf{K}_{z}}d\sigma\\
&=(T_{\overline{\varphi \psi}}u)(z).
\end{split}
\end{equation}
Thus
\begin{equation}
\begin{split}
T_{\bar{\psi}}T_{\bar{\varphi}}=T_{\overline{\varphi \psi}}~~\text{and}~~
S_{\bar{\psi}}S_{\bar{\varphi}}=S_{\overline{\varphi \psi}}.
\end{split}
\end{equation}
Hence ~$\widehat{T}_{\bar{\psi}}\widehat{T}_{\bar{\varphi}}=\widehat{T}_{\overline{\varphi \psi}}$~
and~${\Gamma_{\bar{\psi}}}T_{\bar{\varphi}}+{S_{\bar{\psi}}}{\Gamma_{\bar{\varphi}}}=\Gamma_{\overline{\varphi \psi}}$
are equivalent.

$(a)\Leftrightarrow(c)$:
Since the span of $\{\mathbf{R}_{\eta}:\eta\in\mathbb{T}^{2}\}$ is dense in $h^{2}(\mathbb{T}^{2}),$  we have
\begin{equation}\label{32}
\begin{split}
(\widehat{T}_{\varphi}\widehat{T}_{\psi}\mathbf{R}_{\eta})(z)
&=Q(\varphi\widehat{T}_{\psi}\mathbf{R}_{\eta})(z)\\
&=\int_{\mathbb{T}^{2}}(\varphi\widehat{T}_{\psi}\mathbf{R}_{\eta})\mathbf{R}_{z}d\sigma\\
&=\left<\widehat{T}_{\psi}\mathbf{R}_{\eta},\bar{\varphi}\mathbf{R}_{z}\right>\\
&=\left<Q(\psi\mathbf{R}_{\eta}),Q(\bar{\varphi}\mathbf{R}_{z})\right>\\
&=\left<\psi\mathbf{R}_{\eta},\widehat{T}_{\bar{\varphi}}\mathbf{R}_{z}\right>\\
&=\int_{\mathbb{T}^{2}}\psi\mathbf{R}_{\eta}\overline{\widehat{T}_{\bar{\varphi}}\mathbf{R}_{z}}d\sigma\\
&=\overline{\int_{\mathbb{T}^{2}}\bar{\psi}(\widehat{T}_{\bar{\varphi}}\mathbf{R}_{z})\mathbf{R}_{\eta}d\sigma}\\
&=\overline{(\widehat{T}_{\bar{\psi}}\widehat{T}_{\bar{\varphi}}\mathbf{R}_{z})(\eta)}
\end{split}
\end{equation}
and
\begin{equation}
\begin{split}
(\widehat{T}_{\varphi \psi}\mathbf{R}_{\eta})(z)&=Q(\varphi \psi\mathbf{R}_{\eta})(z)\\
&=\int_{\mathbb{T}^{2}}\varphi \psi\mathbf{R}_{\eta}\mathbf{R}_{z}d\sigma\\
&=\overline{\int_{\mathbb{T}^{2}}\overline{\varphi \psi}\mathbf{R}_{z}\mathbf{R}_{\eta}d\sigma}\\
&=\overline{Q(\overline{\varphi \psi}\mathbf{R}_{z})(\eta)}\\
&=\overline{(\widehat{T}_{\overline{\varphi \psi}}\mathbf{R}_{z})(\eta)},
\end{split}
\end{equation}
it follows that $(\widehat{T}_{\varphi}\widehat{T}_{\psi}\mathbf{R}_{\eta})(z)=(\widehat{T}_{\varphi \psi}\mathbf{R}_{\eta})(z)$
if and only if
$\overline{(\widehat{T}_{\bar{\psi}}\widehat{T}_{\bar{\varphi}}\mathbf{R}_{z})(\eta)}=
\overline{(\widehat{T}_{\overline{\varphi \psi}}\mathbf{R}_{z})(\eta)}.$
So $\widehat{T}_{\varphi}\widehat{T}_{\psi}=\widehat{T}_{\varphi \psi}$ is equivalent to
$\widehat{T}_{\bar{\psi}}\widehat{T}_{\bar{\varphi}}=\widehat{T}_{\overline{\varphi \psi}}.$

$(a)\Leftrightarrow(e):$ see \cite[Theorem 2.1]{liu2013toeplitz}.
\end{proof}
Before stating  Lemma \ref{3}, we recall that for $h\in L^{2}(\mathbb{T}),$  the Hankel operator
$H_{h}$  is defined on $H^{2}(\mathbb{T})$ as follows:
\[H_{h}p=(I_{2}-P_{2})(hp),~~~~~~ p\in H^{2}(\mathbb{T}),\]
where $I_{2}$ is the identity operator on $ L^{2}(\mathbb{T}).$
We define $H^{*}_{h}$ by
\[H^{*}_{h}q=P_{2}(\bar{h}q),~~~~~~~~~~~~q\in {H^{2}(\mathbb{T})}^{\bot}.\]
Clearly $H_{h}$ and $H^{*}_{h}$ are densely defined.
\begin{lemma}\label{3}
Let $\varphi,\psi\in H^{2}(\mathbb{T}^{2})$.Then the following conditions are equivalent.

~~~~~~(a) $\widehat{T}_{\varphi}\widehat{T}_{\bar{\psi}}=\widehat{T}_{\bar{\psi}}\widehat{T}_{\varphi}.$

~~~~~~(b) $\widehat{T}_{\varphi}\widehat{T}_{\bar{\psi}}=\widehat{T}_{\varphi\bar{\psi}}.$

~~~~~~(c) $\widehat{T}_{\bar{\psi}}\widehat{T}_{\varphi}=\widehat{T}_{\varphi\bar{\psi}}.$

~~~~~~(d) $T_{\varphi}T_{\bar{\psi}}+{\Gamma^{*}_{\bar{\varphi}}}{\Gamma_{\bar{\psi}}}=T_{\varphi\bar{\psi}}.$

~~~~~~(e) $S_{\bar{\psi}}S_{\varphi}+{\Gamma_{\bar{\psi}}}{\Gamma^{*}_{\bar{\varphi}}}=S_{\varphi\bar{\psi}}.$

~~~~~~(f)~(f1) $\varphi$ is a function of one variable $z_{1}$ and $\psi$ is a function of one variable $z_{2};$ or

~~~~~~~~~~(f2) $\varphi$ is a function of one variable $z_{2}$ and $\psi$ is a function of one variable $z_{1};$ or

~~~~~~~~~~(f2) $\varphi$ or $\psi$ is constant.

\end{lemma}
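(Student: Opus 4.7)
The plan is to observe that the equivalences $(a)\Leftrightarrow(b)\Leftrightarrow(c)\Leftrightarrow(f)$ are already supplied by Theorem \ref{1.5}, so the only new content is the identification of (b) with (d) and of (c) with (e). Both of these will be extracted by writing out the matrix representations of all operators involved under the decomposition $h^{2}(\mathbb{T}^{2})=H^{2}(\mathbb{T}^{2})\oplus \overline{H^{2}_{0}(\mathbb{T}^{2})}$ and comparing entry by entry, using Lemma \ref{4} to eliminate three of the four entries automatically.

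First, I would simplify the matrix \eqref{36} for $\widehat{T}_{\varphi}$ and $\widehat{T}_{\bar{\psi}}$. Since $\varphi\in H^{2}(\mathbb{T}^{2})$, the operator $\Gamma_{\varphi}$ annihilates analytic polynomials; and since $\bar{\psi}\bar{v}$ is co-analytic with vanishing constant term whenever $\bar v\in \overline{H^2_0(\mathbb T^2)}$, the $(1,2)$ block of $\widehat{T}_{\bar{\psi}}$ is likewise zero. Consequently
\[
\widehat{T}_{\varphi}=\begin{bmatrix}T_{\varphi}&\Gamma^{*}_{\bar{\varphi}}\\ 0&S_{\varphi}\end{bmatrix},\qquad
\widehat{T}_{\bar{\psi}}=\begin{bmatrix}T_{\bar{\psi}}&0\\ \Gamma_{\bar{\psi}}&S_{\bar{\psi}}\end{bmatrix},
\]
and the two products take the form
\[
\widehat{T}_{\varphi}\widehat{T}_{\bar\psi}=\begin{bmatrix}T_{\varphi}T_{\bar{\psi}}+\Gamma^{*}_{\bar{\varphi}}\Gamma_{\bar{\psi}}&\Gamma^{*}_{\bar{\varphi}}S_{\bar{\psi}}\\ S_{\varphi}\Gamma_{\bar{\psi}}&S_{\varphi}S_{\bar{\psi}}\end{bmatrix},\quad
\widehat{T}_{\bar\psi}\widehat{T}_{\varphi}=\begin{bmatrix}T_{\bar{\psi}}T_{\varphi}&T_{\bar{\psi}}\Gamma^{*}_{\bar{\varphi}}\\ \Gamma_{\bar{\psi}}T_{\varphi}&\Gamma_{\bar{\psi}}\Gamma^{*}_{\bar{\varphi}}+S_{\bar{\psi}}S_{\varphi}\end{bmatrix}.
\]

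Next I would match these against the matrix of $\widehat{T}_{\varphi\bar\psi}$, whose entries are $T_{\varphi\bar\psi}$, $\Gamma^{*}_{\bar{\varphi}\psi}$, $\Gamma_{\varphi\bar\psi}$, $S_{\varphi\bar\psi}$. For the equivalence $(b)\Leftrightarrow(d)$, Lemma \ref{4} (applied with the roles of $\varphi,\psi$ switched in the starred identities) immediately gives
$\Gamma^{*}_{\bar{\varphi}}S_{\bar{\psi}}=\Gamma^{*}_{\bar{\varphi}\psi}$ and $S_{\varphi}\Gamma_{\bar{\psi}}=\Gamma_{\varphi\bar\psi}$, and a direct computation with the definition of $S$ yields $S_{\varphi}S_{\bar{\psi}}\bar v=(Q-P)(\varphi\bar\psi\bar v)=S_{\varphi\bar\psi}\bar v$. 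Hence three of the four entries agree tautologically, and $\widehat{T}_{\varphi}\widehat{T}_{\bar\psi}=\widehat{T}_{\varphi\bar\psi}$ collapses to the single identity in (d). The equivalence $(c)\Leftrightarrow(e)$ is handled in exactly the same spirit: one checks $T_{\bar\psi}T_{\varphi}u=P(\bar\psi\varphi u)=T_{\varphi\bar\psi}u$ on analytic polynomials, invokes Lemma \ref{4} for the off-diagonal entries $T_{\bar{\psi}}\Gamma^{*}_{\bar{\varphi}}=\Gamma^{*}_{\bar{\varphi}\psi}$ and $\Gamma_{\bar{\psi}}T_{\varphi}=\Gamma_{\varphi\bar\psi}$, and then the matrix equation $\widehat{T}_{\bar\psi}\widehat{T}_{\varphi}=\widehat{T}_{\varphi\bar\psi}$ reduces precisely to (e).

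Finally, piecing things together, I would close the cycle by quoting Theorem \ref{1.5}: $(a)\Leftrightarrow(b)\Leftrightarrow(c)\Leftrightarrow(f)$. There is no real obstacle here beyond bookkeeping; the only thing one must be a little careful about is the interpretation of $\Gamma^{*}_{\bar{f}}$ as an unbounded operator densely defined on $\overline{H^{2}_{0}(\mathbb{T}^{2})}$, so all identities are to be read in the sense agreed upon in the paragraph following the Remark of Section 2 (equality on the intersection of the natural polynomial domains). With that convention the matrix manipulations above are rigorous, and the lemma follows.
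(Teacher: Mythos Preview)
Your argument for $(b)\Leftrightarrow(d)$ and $(c)\Leftrightarrow(e)$ via the matrix representation and Lemma~\ref{4} is exactly what the paper does, and that part is fine.

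The gap is in your appeal to Theorem~\ref{1.5} for the remaining equivalences. Theorem~\ref{1.5} is stated for \emph{bounded} functions $\varphi,\psi\in H^{2}(\mathbb{T}^{2})\cap L^{\infty}$, whereas Lemma~\ref{3} is stated for arbitrary $\varphi,\psi\in H^{2}(\mathbb{T}^{2})$. This distinction is not cosmetic: the lemma is later applied (in the proofs of Theorems~\ref{27} and~\ref{26}) to the pieces $f_{+},\overline{f_{-}},g_{+},\overline{g_{-}}$ of a bounded pluriharmonic symbol, and those analytic pieces need not be bounded individually. The paper is explicit about this, remarking at the start of the $(d)\Rightarrow(f)$ step that the result was previously known only for bounded symbols and then supplying a self-contained argument---expansion in $z_{1}$, reduction to the one-variable identity $H_{\overline{\varphi_{l}}}^{*}H_{\overline{\psi_{k}}}=T_{\varphi_{l+1}}T_{\overline{\psi}_{k+1}}-\varphi_{l+1}\otimes\psi_{k+1}$, and a Berezin-transform limit computation---that works under the sole hypothesis $\varphi,\psi\in H^{2}(\mathbb{T}^{2})$. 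Your proposal bypasses this step entirely, so as written it only proves the lemma under an additional boundedness hypothesis that the applications do not supply.

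For $(a)\Leftrightarrow(b)\Leftrightarrow(c)$ the paper does not invoke Theorem~\ref{1.5} either; it points instead to the reproducing-kernel argument of \cite[Theorem~3.1]{ding2008question}, which again avoids boundedness. You should either reproduce that argument or note that your matrix computations already give $(b)\Leftrightarrow(d)$ and $(c)\Leftrightarrow(e)$, after which a direct proof of $(d)\Rightarrow(f)$ (and the easy converse $(f)\Rightarrow(b)$) closes the cycle without ever passing through Theorem~\ref{1.5}.
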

\begin{proof}
$(a)\Leftrightarrow(b)\Leftrightarrow(c)$:
The proof is very similar to the proof
 \cite[Theorem 3.1]{ding2008question} and we omit it.

$(b)\Leftrightarrow(d)$: Using the matrix representations of $\widehat{T}_{\varphi}\widehat{T}_{\bar{\psi}}$
and $\widehat{T}_{\varphi\bar{\psi}},$  we have
\begin{gather*}
\widehat{T}_{\varphi}\widehat{T}_{\bar{\psi}}=\begin{bmatrix}T_{\varphi}T_{\bar{\psi}}+{\Gamma^{*}_{\bar{\varphi}}}{\Gamma_{\bar{\psi}}} & {\Gamma^{*}_{\bar{\varphi}}}{S_{\bar{\psi}}}\\
S_{\varphi}{\Gamma_{\bar{\psi}}} & S_{\varphi}S_{\bar{\psi}}\end{bmatrix}~\text{and}~~
\widehat{T}_{\varphi\bar{\psi}}=\begin{bmatrix}T_{\varphi\bar{\psi}} & {\Gamma^{*}_{\bar{\varphi}\psi}}\\
\Gamma_{\varphi\bar{\psi}} & S_{\varphi\bar{\psi}}\end{bmatrix}.
\end{gather*}
By Lemma \ref{4},
\begin{equation*}
\begin{split}
S_{\varphi}{\Gamma_{\bar{\psi}}}=\Gamma_{\varphi\bar{\psi}}~~~\text{and}~~~
{\Gamma^{*}_{\bar{\varphi}}}{S_{\bar{\psi}}}= {\Gamma^{*}_{\bar{\varphi}\psi}}.
\end{split}
\end{equation*}
Since
\begin{equation*}
\begin{split}
S_{\varphi}S_{\bar{\psi}}\bar{v}=&(Q-P)\varphi(Q-P)(\bar{\psi}\bar{v})\\
=&(Q-P)(\varphi\overline{\psi v})=S_{\varphi\bar{\psi}}\bar{v},
\end{split}
\end{equation*}
that is
\begin{equation}
\begin{split}
S_{\varphi}S_{\bar{\psi}}=S_{\varphi\bar{\psi}}.
\end{split}
\end{equation}
Thus ~$\widehat{T}_{\varphi}\widehat{T}_{\bar{\psi}}=\widehat{T}_{\varphi\bar{\psi}}$~
and ~$T_{\varphi}T_{\bar{\psi}}+{\Gamma^{*}_{\bar{\varphi}}}{\Gamma_{\bar{\psi}}}=T_{\varphi\bar{\psi}}$~
are equivalent.

$(c)\Leftrightarrow(e)$: By the matrix representations of $\widehat{T}_{\bar{\psi}}\widehat{T}_{\varphi}$
and $\widehat{T}_{\varphi\bar{\psi}}$ , we have
\begin{gather*}
\widehat{T}_{\bar{\psi}}\widehat{T}_{\varphi}=\begin{bmatrix}T_{\bar{\psi}}T_{\varphi} & {T_{\bar{\psi}}}{\Gamma^{*}_{\bar{\varphi}}}\\
{\Gamma_{\bar{\psi}}}T_{\varphi} & S_{\bar{\psi}}S_{\varphi}+{\Gamma_{\bar{\psi}}}{\Gamma^{*}_{\bar{\varphi}}}\end{bmatrix}~\text{and}~~
\widehat{T}_{\varphi\bar{\psi}}=\begin{bmatrix}T_{\varphi\bar{\psi}} & {\Gamma^{*}_{\bar{\varphi}\psi}}\\
\Gamma_{\varphi\bar{\psi}} & S_{\varphi\bar{\psi}}\end{bmatrix}.
\end{gather*}

Moreover, Lemma \ref{4} implies that
\begin{equation*}
\begin{split}
{T_{\bar{\psi}}}{\Gamma^{*}_{\bar{\varphi}}}={\Gamma^{*}_{\bar{\varphi}\psi}}~\text{and}~
{\Gamma_{\bar{\psi}}}T_{\varphi}= \Gamma_{\varphi\bar{\psi}}.
\end{split}
\end{equation*}
Since
\begin{equation*}
\begin{split}
T_{\bar{\psi}}T_{\varphi}u=&P\bar{\psi}P\varphi u\\
=&P(\bar{\psi}\varphi u)=T_{\varphi\bar{\psi}}u,
\end{split}
\end{equation*}
that is
\begin{equation}
\begin{split}
T_{\bar{\psi}}T_{\varphi}=T_{\varphi\bar{\psi}}.
\end{split}
\end{equation}
Hence ~$\widehat{T}_{\bar{\psi}}\widehat{T}_{\varphi}=\widehat{T}_{\varphi\bar{\psi}}$~
and ~$S_{\bar{\psi}}S_{\varphi}+{\Gamma_{\bar{\psi}}}{\Gamma^{*}_{\bar{\varphi}}}=S_{\varphi\bar{\psi}}$~
are equivalent.

$(d)\Rightarrow(f)$: Authors prove this result for bounded symbols in \cite[Theorem 1]{Sang2016toeplitz}.
For the sake of  reader's convenience, we will prove it again without the condition of bounded symobls.

Write
\begin{equation*}
\varphi= \sum_{i=0}^{+\infty}\varphi_{i}(z_{2})z^{i}_{1},~~
\psi= \sum_{j=0}^{+\infty}\psi_{j}(z_{2})z^{j}_{1}.
\end{equation*}

Let $\alpha,\beta,k,l \in\mathbb{Z_{+}}$ and without loss of generality we assume $k\geq l,$
\[L_{k,l}\triangleq  \left< (T_{\varphi\bar{\psi}}-T_{\varphi}T_{\bar{\psi}}){z^{k}_{1}}{z^{\alpha}_{2}},{z^{l}_{1}}{z^{\beta}_{2}}\right>\]
and \[R_{k,l}\triangleq  \left<\Gamma_{\bar{\varphi}}^{*}\Gamma_{\bar{\psi}}{z^{k}_{1}}{z^{\alpha}_{2}} ,{z^{l}_{1}}{z^{\beta}_{2}}\right>.\]
Using the similar method discussed in \cite{gu1997semi}, we have
\begin{equation}\label{50}
\begin{split}
L_{k,l}=&\left< (T_{\varphi\bar{\psi}}-T_{\varphi}T_{\bar{\psi}})z_{1}^{k}z^{\alpha}_{2} ,z_{1}^{l}z^{\beta}_{2}\right>\\
=&\left<\bar{\psi}z_{1}^{k}z^{\alpha}_{2},\bar{\varphi}z_{1}^{l}z^{\beta}_{2}\right>
-\left<P(\bar{\psi}z_{1}^{k}z^{\alpha}_{2}),\bar{\varphi}z_{1}^{l}z^{\beta}_{2}\right>\\
=&\left< \sum_{j=0}^{+\infty}\overline{\psi_{j}}({z_{2}})z^{\alpha}_{2}\bar{z}^{j}_{1}z_{1}^{k},
\sum_{i=0}^{+\infty}\overline{\varphi_{i}}({z_{2}})z^{\beta}_{2}\bar{z_{1}}^{i}z_{1}^{l}\right >\\
&-\left< \sum_{j=0}^{k}P_{2}(\overline{\psi_{j}}({z_{2}})z^{\alpha}_{2})z_{1}^{k-j},
\sum_{i=0}^{l}P_{2}(\overline{\varphi_{i}}({z_{2}}){z_{2}}^{\beta})z_{1}^{l-i} \right >\\
=&\sum_{j=0}^{+\infty}\left<\overline{\psi_{j}}({z_{2}})z^{\alpha}_{2},\overline{\varphi}_{l-k+j}({z_{2}})z^{\beta}_{2} \right>\\
&-\sum_{j=0}^{k}\left< P_{2}(\overline{\psi_{j}}({z_{2}})z^{\alpha}_{2}),\overline{\varphi}_{l-k+j}({z_{2}})z^{\beta}_{2}\right >
\end{split}
\end{equation}
and
\begin{equation}\label{51}
\begin{split}
R_{k,l}=&\left< \Gamma_{\bar{\psi}}z^{k}_{1}z^{\alpha}_{2} , \Gamma_{\bar{\varphi}}{z^{l}_{1}}z^{\beta}_{2}\right>\\
=&\left<P^{-}(\bar{\psi}z_{1}^{k}z^{\alpha}_{2})-P(\bar{\psi}z_{1}^{k}z^{\alpha}_{2})(0), \bar{\varphi}z^{l}_{1}z^{\beta}_{2}\right>\\
=&\left< \sum_{j=k}^{+\infty}P^{-}_{2}(\overline{\psi_{j}}({z_{2}})z^{\alpha}_{2})\bar{z}^{j-k}_{1},
 \sum_{i=l}^{+\infty}{P^{-}_{2}}(\overline{\varphi_{i}}({z_{2}})z^{\beta}_{2})\bar{z_{1}}^{i-l}\right>\\
&-\left<  P_{2}(\overline{\psi_{k}}({z_{2}})z^{\alpha}_{2})(0), \overline{\varphi_{l}}({z_{2}})z^{\beta}_{2}\right>\\
=&\sum_{j=k}^{+\infty}\left< {P^{-}_{2}}(\overline{\psi_{j}}({z_{2}})z^{\alpha}_{2}),\overline{\varphi}_{l-k+j}({z_{2}})z^{\beta}_{2}\right>\\
&-\left< P_{2}(\overline{\psi_{k}}({z_{2}})z^{\alpha}_{2})(0), \overline{\varphi_{l}}({z_{2}})z^{\beta}_{2}\right>.
\end{split}
\end{equation}
Furthermore, \eqref{50} and \eqref{51} imply that
\begin{equation*}
\begin{split}
L_{k,l}-L_{k+1,l+1}=R_{k,l}-R_{k+1,l+1},
\end{split}
\end{equation*}
that is,
\begin{equation*}
\begin{split}
&\left<  P^{-}_{2}(\overline{\psi}_{k}({z_{2}})z^{\alpha}_{2})-P_{2}(\overline{\psi}_{k}({z_{2}})z^{\alpha}_{2})(0)
,\overline{\varphi}_{l}({z_{2}}){z}^{\beta}_{2} \right>\\
=&\left< P_{2}(\overline{\psi}_{k+1}({z_{2}})z^{\alpha}_{2})-P_{2}(\overline{\psi}_{k+1}({z_{2}})z^{\alpha}_{2})(0)
,\overline{\varphi}_{l+1}({z_{2}})z^{\beta}_{2} \right>,\\
\end{split}
\end{equation*}
it follows that
\[\left< H_{\overline{\psi}_{k}}{z_{2}}^{\alpha},H_{\overline{\varphi}_{l}}{z_{2}}^{\beta}\right>
=\left< (T_{\varphi_{l+1}}T_{\overline{\psi}_{k+1}}-{\varphi_{l+1}}\otimes{\psi_{k+1}}){z_{2}}^{\alpha},{z_{2}}^{\beta}\right>
.\]\
Thus
\begin{equation*}
\begin{split}
H_{\overline{\varphi}_{l}}^{*}H_{\overline{\psi}_{k}}=T_{\varphi_{l+1}}T_{\overline{\psi}_{k+1}}-{\varphi_{l+1}}\otimes{\psi_{k+1}}
\end{split}
\end{equation*}
holds on $H^{2}(\mathbb{T}).$
We calculate the Berezin transform of $H_{\overline{\varphi_{l}}}^{*}H_{\overline{\psi_{k}}}$,
$T_{\varphi_{l+1}}T_{\overline{\psi}_{k+1}}$
and ${\varphi_{l+1}}\otimes{\psi_{k+1}}$ to get
\begin{equation*}
\begin{split}
\left<  H_{\overline{\varphi_{l}}}^{*}H_{\overline{\psi_{k}}}k_{\lambda_{2}} ,k_{\lambda_{2}}\right>
&=\left< H_{\overline{\psi_{k}}}k_{\lambda_{2}} ,H_{\overline{\varphi_{l}}}k_{\lambda_{2}}\right>\\
&=\left<  (I_{2}-P_{2})\overline{\psi_{k}}k_{\lambda_{2}} ,\overline{\varphi_{l}}k_{\lambda_{2}}\right> \\
&=\mathcal{P}[\varphi_{l}\overline{\psi_{k}}](\lambda_{2})-\varphi_{l}(\lambda_{2}) \overline{\psi}_{k}(\lambda_{2}),
\end{split}
\end{equation*}
\begin{equation*}
\begin{split}
\left< ({\varphi_{l+1}}\otimes{\psi_{k+1}}) k_{\lambda_{2}} ,k_{\lambda_{2}}\right>
&=\left< \left< k_{\lambda_{2}},{\psi_{k+1}} \right>{\varphi_{l+1}} ,k_{\lambda_{2}}\right>\\
&=\left< k_{\lambda_{2}},{\psi_{k+1}} \right>\left< {\varphi_{l+1}} ,k_{\lambda_{2}}\right> \\
&=P_{2}(\overline{\psi}_{k+1}k_{\lambda_{2}})(0)\overline{P_{2}(\overline{\varphi}_{l+1}k_{\lambda_{2}})(0)}\\
&=(1-|\lambda_{2}|^{2})\varphi_{l+1}(\lambda_{2})\overline{\psi}_{k+1}(\lambda_{2}),
\end{split}
\end{equation*}
and
\begin{equation*}
\begin{split}
\left<  T_{\varphi_{l+1}}T_{\overline{\psi}_{k+1}}k_{\lambda_{2}},k_{\lambda_{2}}\right>
&=\left< T_{\overline{\psi}_{k+1}}k_{\lambda_{2}} ,T_{\overline{\varphi}_{l+1}}k_{\lambda_{2}}\right>\\
&=\left< P_{2}(\overline{\psi}_{k+1}k_{\lambda_{2}}) ,P_{2}(\overline{\varphi}_{l+1}k_{\lambda_{2}})\right> \\
&=\varphi_{l+1}(\lambda_{2})\overline{\psi}_{k+1}(\lambda_{2}).
\end{split}
\end{equation*}

Since
\begin{equation*}
\begin{split}
\lim_{|\lambda_{2}|\rightarrow{1^{-}}}\bigg(\mathcal{P}[{\varphi}_{l}\overline{\psi_{k}}](\lambda_{2})-{\varphi}_{l}(\lambda_{2}) \overline{\psi}_{k}(\lambda_{2})\bigg)=0
\end{split}
\end{equation*}
and
\begin{equation*}
\begin{split}
\lim_{|\lambda_{2}|\rightarrow{1^{-}}}(1-|\lambda_{2}|^{2})\varphi_{l+1}(\lambda_{2})\overline{\psi}_{k+1}(\lambda_{2})=0,
\end{split}
\end{equation*}
we have
\[\lim_{|\lambda_{2}|\rightarrow{1^{-}}}\varphi_{l+1}(\lambda_{2})\overline{\psi}_{k+1}(\lambda_{2})=0,\]
it follows that\[\varphi_{l+1}\overline{\psi}_{k+1}=0 ~\text{a.e. on}~\mathbb{T}.\]
Theeqrefore either $\varphi_{l+1}(z_{2})=0$ for all $l\geq{0} $ or $\psi_{k+1}(z_{2})=0 $ for all $k\geq{0}$.
In this case, either $\varphi$ or $\psi$ is constant with respect to variable $z_{1}.$
Using the same argument as before, so either $\varphi$ or $\psi$ is constant with respect to variable $z_{2}.$

$(f)\Rightarrow(b)$: See \cite[Theorem 4.2]{liu2013toeplitz}.
\end{proof}
\begin{lemma}\label{8}
Let $\varphi,\psi\in H^{2}(\mathbb{T}^{2})$.Then the following conditions are equivalent.

~~~~~~~~~~~~(a) $\widehat{T}_{\varphi}\widehat{T}_{\psi}=\widehat{T}_{\psi}\widehat{T}_{\varphi}.$

~~~~~~~~~~~~(b) $T_{\varphi}{\Gamma^{*}_{\bar{\psi}}}+{\Gamma^{*}_{\bar{\varphi}}}{S_{\psi}}=T_{\psi}{\Gamma^{*}_{\bar{\varphi}}}+{\Gamma^{*}_{\bar{\psi}}}{S_{\varphi}}.$

~~~~~~~~~~~~(c) $\widehat{T}_{\bar{\varphi}}\widehat{T}_{\bar{\psi}}=\widehat{T}_{\bar{\psi}}\widehat{T}_{\bar{\varphi}}.$

~~~~~~~~~~~~(d) ${\Gamma_{\bar{\varphi}}}T_{\bar{\psi}}+{S_{\bar{\varphi}}}{\Gamma_{\bar{\psi}}}={\Gamma_{\bar{\psi}}}T_{\bar{\varphi}}+{S_{\bar{\psi}}}{\Gamma_{\bar{\varphi}}}.$

~~~~~~~~~~~~(e)~(e1) Both $\varphi$ and $\psi$ are functions of one variable $z_{1}$; or

~~~~~~~~~~~~~~~~~(e2)  Both $\varphi$ and $\psi$ are functions of one variable $z_{2}$; or

~~~~~~~~~~~~~~~~~(e2) A nontrivial linear combination of $\varphi$ and $\psi$ is constant.

\end{lemma}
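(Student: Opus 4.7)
The plan is to mirror the structure of the proof of Lemma \ref{2}, since Lemma \ref{8} is the commutator analogue of that semicommutator result. The chain of implications will be $(a)\Leftrightarrow(b)$ and $(c)\Leftrightarrow(d)$ via matrix representations, $(a)\Leftrightarrow(c)$ via a duality trick using the reproducing kernel $\mathbf{R}_\eta$, and finally $(a)\Leftrightarrow(e)$ by direct citation of Theorem \ref{01}.

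For $(a)\Leftrightarrow(b)$, I would write out the block matrix of $\widehat{T}_\varphi\widehat{T}_\psi$ and of $\widehat{T}_\psi\widehat{T}_\varphi$ using \eqref{37}. Because $\varphi,\psi$ are analytic, we have $\Gamma_\varphi=\Gamma_\psi=0$, so both product matrices are upper triangular. The diagonal entries automatically coincide: identity \eqref{57} gives $T_\varphi T_\psi = T_{\varphi\psi}=T_{\psi\varphi}=T_\psi T_\varphi$, and the calculation \eqref{33} together with the fact that $\varphi\psi=\psi\varphi$ gives $S_\varphi S_\psi=S_{\varphi\psi}=S_\psi S_\varphi$. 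Thus the two products agree if and only if the upper-right blocks $T_\varphi\Gamma^{*}_{\bar\psi}+\Gamma^{*}_{\bar\varphi}S_\psi$ and $T_\psi\Gamma^{*}_{\bar\varphi}+\Gamma^{*}_{\bar\psi}S_\varphi$ agree, which is exactly (b). The equivalence $(c)\Leftrightarrow(d)$ is strictly analogous: the matrices of $\widehat{T}_{\bar\varphi}\widehat{T}_{\bar\psi}$ and $\widehat{T}_{\bar\psi}\widehat{T}_{\bar\varphi}$ are lower triangular, their diagonals coincide by the same arguments applied to $T_{\bar\varphi}T_{\bar\psi}=T_{\overline{\varphi\psi}}$ (via \eqref{34}) and $S_{\bar\varphi}S_{\bar\psi}=S_{\overline{\varphi\psi}}$ (via \eqref{35}), so commutativity reduces to equality of the lower-left blocks, which is (d).

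For $(a)\Leftrightarrow(c)$ I would reuse the computation \eqref{32} verbatim. That identity gives
\[
(\widehat{T}_\varphi\widehat{T}_\psi \mathbf{R}_\eta)(z)=\overline{(\widehat{T}_{\bar\psi}\widehat{T}_{\bar\varphi} \mathbf{R}_z)(\eta)},
\qquad
(\widehat{T}_\psi\widehat{T}_\varphi \mathbf{R}_\eta)(z)=\overline{(\widehat{T}_{\bar\varphi}\widehat{T}_{\bar\psi} \mathbf{R}_z)(\eta)}.
\]
Since the span of $\{\mathbf{R}_\eta:\eta\in\mathbb{D}^2\}$ is dense in $h^2(\mathbb{T}^2)$, the operator identity $\widehat{T}_\varphi\widehat{T}_\psi=\widehat{T}_\psi\widehat{T}_\varphi$ is equivalent, after taking complex conjugates and swapping the roles of $z$ and $\eta$, to $\widehat{T}_{\bar\psi}\widehat{T}_{\bar\varphi}=\widehat{T}_{\bar\varphi}\widehat{T}_{\bar\psi}$, which is (c).

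Finally, $(a)\Leftrightarrow(e)$ is precisely the statement of Theorem \ref{01} and requires no new work. There is no genuine obstacle: all four of the operator identities are equivalent reformulations of analytic Toeplitz commutativity, and the symbol-level classification has already been done in the cited reference. The only mild care needed is verifying the diagonal cancellations in steps $(a)\Leftrightarrow(b)$ and $(c)\Leftrightarrow(d)$ — specifically the identities $S_\varphi S_\psi=S_\psi S_\varphi$ and $T_{\bar\varphi}T_{\bar\psi}=T_{\bar\psi}T_{\bar\varphi}$ — but these follow from the commutativity of pointwise multiplication exactly as in the Lemma \ref{2} proof.
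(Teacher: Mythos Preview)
Your proposal is correct and follows essentially the same approach as the paper's proof: block-matrix reductions via \eqref{37} with diagonal cancellations from \eqref{57}, \eqref{33}, \eqref{34}, \eqref{35} for $(a)\Leftrightarrow(b)$ and $(c)\Leftrightarrow(d)$; the duality identity \eqref{32} for $(a)\Leftrightarrow(c)$; and citation of Theorem~\ref{01} for $(a)\Leftrightarrow(e)$.
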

\begin{proof}

$(a)\Leftrightarrow(b)$: By \eqref{37}, we have
\begin{gather*}
\widehat{T}_{\varphi}\widehat{T}_{\psi}=\begin{bmatrix}T_{\varphi}T_{\psi} & T_{\varphi}{\Gamma^{*}_{\bar{\psi}}}+{\Gamma^{*}_{\bar{\varphi}}}{S_{\psi}}\\
0 & S_{\varphi}S_{\psi}\end{bmatrix}~\text{and}~~
\widehat{T}_{\psi}\widehat{T}_{\varphi}=\begin{bmatrix}T_{\psi}T_{\varphi} & T_{\psi}{\Gamma^{*}_{\bar{\varphi}}}+{\Gamma^{*}_{\bar{\psi}}}{S_{\varphi}}\\
0 & S_{\psi}S_{\varphi}\end{bmatrix}.\quad
\end{gather*}
Since
\begin{equation*}
\begin{split}
T_{\varphi}T_{\psi}u=&P{\varphi}P{\psi}u\\
=&P{\varphi}{\psi}u\\
=&P{\psi}P{\varphi}u=T_{\psi}T_{\varphi}u,
\end{split}
\end{equation*}
it follows that $T_{\varphi}T_{\psi}=T_{\psi}T_{\varphi}.$
By \eqref{33}, we have
$S_{\varphi}S_{\psi}=S_{\psi \varphi}$ and $S_{\psi}S_{\varphi}=S_{\psi \varphi},$ so that $S_{\varphi}S_{\psi}=S_{\psi}S_{\varphi}.$
Thus $\widehat{T}_{\varphi}\widehat{T}_{\psi}=\widehat{T}_{\psi}\widehat{T}_{\varphi}$ is equivalent to
$T_{\varphi}{\Gamma^{*}_{\bar{\psi}}}+{\Gamma^{*}_{\bar{\varphi}}}{S_{\psi}}=T_{\psi}{\Gamma^{*}_{\bar{\varphi}}}+{\Gamma^{*}_{\bar{\psi}}}{S_{\varphi}}.$

$(c)\Leftrightarrow(d)$: Again by \eqref{37}, we have
\begin{gather*}
\widehat{T}_{\bar{\varphi}}\widehat{T}_{\bar{\psi}}=\begin{bmatrix}T_{\bar{\varphi}}T_{\bar{\psi}} & 0\\
{\Gamma_{\bar{\varphi}}}T_{\bar{\psi}}+{S_{\bar{\varphi}}}{\Gamma_{\bar{\psi}}} & S_{\bar{\varphi}}S_{\bar{\psi}}\end{bmatrix}~\text{and}~~
\widehat{T}_{\bar{\psi}}\widehat{T}_{\bar{\varphi}}=\begin{bmatrix}T_{\bar{\psi}}T_{\bar{\varphi}} & 0\\
{\Gamma_{\bar{\psi}}}T_{\bar{\varphi}}+{S_{\bar{\psi}}}{\Gamma_{\bar{\varphi}}} & S_{\bar{\psi}}S_{\bar{\varphi}}\end{bmatrix}.\quad
\end{gather*}

By \eqref{34},
$T_{\bar{\varphi}}T_{\bar{\psi}}=T_{\overline{\psi \varphi}}$  and  $T_{\bar{\psi}}T_{\bar{\varphi}}=T_{\overline{\psi \varphi}},$
that is $T_{\bar{\varphi}}T_{\bar{\psi}}=T_{\bar{\psi}}T_{\bar{\varphi}}.$
Using \eqref{35}, we have
$S_{\bar{\varphi}}S_{\bar{\psi}}=S_{\overline{\psi \varphi}}$
and $S_{\bar{\psi}}S_{\bar{\varphi}}=S_{\overline{\psi \varphi}},$
so $S_{\bar{\varphi}}S_{\bar{\psi}}=S_{\bar{\psi}}S_{\bar{\varphi}}.$
It follows that $\widehat{T}_{\bar{\varphi}}\widehat{T}_{\bar{\psi}}=\widehat{T}_{\bar{\psi}}\widehat{T}_{\bar{\varphi}}$ and
${\Gamma_{\bar{\varphi}}}T_{\bar{\psi}}+{S_{\bar{\varphi}}}{\Gamma_{\bar{\psi}}}={\Gamma_{\bar{\psi}}}T_{\bar{\varphi}}+{S_{\bar{\psi}}}{\Gamma_{\bar{\varphi}}}$
are equivalent.

$(a)\Leftrightarrow(c)$: By \eqref{32}, we have
$(\widehat{T}_{\varphi}\widehat{T}_{\psi}\mathbf{R}_{\eta})(z)=\overline{(\widehat{T}_{\bar{\psi}}\widehat{T}_{\bar{\varphi}}\mathbf{R}_{z})(\eta)},$
and similarly
$(\widehat{T}_{\psi}\widehat{T}_{\varphi}\mathbf{R}_{\eta})(z)=\overline{(\widehat{T}_{\bar{\varphi}}\widehat{T}_{\bar{\psi}}\mathbf{R}_{z})(\eta)},$
so $\widehat{T}_{\varphi}\widehat{T}_{\psi}=\widehat{T}_{\psi}\widehat{T}_{\varphi}$ and
$\widehat{T}_{\bar{\varphi}}\widehat{T}_{\bar{\psi}}=\widehat{T}_{\bar{\psi}}\widehat{T}_{\bar{\varphi}}$ are equivalent.

$(a)\Leftrightarrow(e)$: See \cite[Theorem 3.1]{liu2013toeplitz}.
\end{proof}
\section{Semi-Commuting Toeplitz operators}
\hspace*{\parindent}

{\bf{Theorem \ref{27}}} can be restated as follows:

Let $f,g$ be two bounded functions in $ h^{2}(\mathbb{T}^{2})$, and $f=f_{+}+f_{-},~g=g_{+}+g_{-},$ where $f_{+},~\bar{f}_{-},~g_{+},~\bar{g}_{-}\in H^{2}(\mathbb{T}^{2}).$
Then ~$\widehat{T}_f\widehat{T}_g=\widehat{T}_{fg}$~ if and only if one of the following conditions is satisfied:

(A)  $f_{+}=f_{+}(z_{1}),~f_{-}=f_{-}(z_{2}),~g_{+}=g_{+}(z_{1}),~g_{-}=g_{-}(z_{2});$

(B)  $f_{+}=f_{+}(z_{2}),~f_{-}=f_{-}(z_{1}),~g_{+}=g_{+}(z_{2}),~g_{-}=g_{-}(z_{1});$

(C) Either $f$ or $g$ is constant.

\begin{proof}
Suppose that $\widehat{T}_f\widehat{T}_g=\widehat{T}_{fg},$ by the matrix representations of  $\widehat{T}_{f}\widehat{T}_{g}$  \eqref{37}
and  $\widehat{T}_{fg}$ \eqref{36}, we have
\begin{gather*}
\widehat{T}_{f}\widehat{T}_{g}=\begin{bmatrix}T_{f}T_{g}+{\Gamma^{*}_{\bar{f}}}{\Gamma_{g}} & T_{f}{\Gamma^{*}_{\bar{g}}}+{\Gamma^{*}_{\bar{f}}}{S_{g}}\\
\Gamma_{f}T_{g}+S_{f}{\Gamma_{g}} & \Gamma_{f}{\Gamma^{*}_{\bar{g}}}+S_{f}S_{g}\end{bmatrix}
=\begin{bmatrix} T_{fg} & {\Gamma^{*}_{\overline{fg}}}\\
   \Gamma_{fg} &  S_{fg}\end{bmatrix}=\widehat{T}_{fg},
\end{gather*}
hence
\begin{equation}\label{6}
\begin{split}
T_{f}T_{g}+{\Gamma^{*}_{\bar{f}}}{\Gamma_{g}}&=T_{fg},\\
T_{f}{\Gamma^{*}_{\bar{g}}}+{\Gamma^{*}_{\bar{f}}}{S_{g}}&= {\Gamma^{*}_{\overline{fg}}},\\
\Gamma_{f}T_{g}+S_{f}{\Gamma_{g}}&=\Gamma_{fg},\\
\Gamma_{f}{\Gamma^{*}_{\bar{g}}}+S_{f}S_{g}&=S_{fg}.
\end{split}
\end{equation}

By substituting $f=f_{+}+f_{-}$ and $ g=g_{+}+g_{-}$ into \eqref{6}, we obtain
\begin{equation}\label{12}
\begin{split}
&T_{f_{+}}T_{g_{+}}+T_{f_{+}}T_{g_{-}}+T_{f_{-}}T_{g_{+}}+T_{f_{-}}T_{g_{-}}
+{\Gamma^{*}_{\overline{f_{+}}}}{\Gamma_{g_{-}}}\\
=&T_{f_{+}g_{+}}+T_{f_{+}g_{-}}+T_{f_{-}g_{+}}+T_{f_{-}g_{-}},\\
&T_{f_{+}}{\Gamma^{*}_{\bar{g}_{+}}}+T_{f_{-}}{\Gamma^{*}_{\bar{g}_{+}}}
+{\Gamma^{*}_{\bar{f}_{+}}}{S_{g_{+}}}+{\Gamma^{*}_{\bar{f}_{+}}}{S_{g_{-}}}\\
=&{\Gamma^{*}_{\overline{f_{+}g_{+}}}}+\Gamma^{*}_{\overline{f_{+}g_{-}}}+\Gamma^{*}_{\overline{{f_{-}g_{+}}}},\\
&\Gamma_{f_{-}}T_{g_{+}}+\Gamma_{f_{-}}T_{g_{-}}
+S_{f_{+}}{\Gamma_{g_{-}}}+S_{f_{-}}{\Gamma_{g_{-}}}\\
=&\Gamma_{f_{+}g_{-}}+\Gamma_{{f_{-}}g_{+}}+\Gamma_{{f_{-}g_{-}}},\\
&\Gamma_{f_{-}}{\Gamma^{*}_{\overline{g_{+}}}}+S_{f_{+}}S_{g_{+}}+S_{f_{+}}S_{g_{-}}+S_{f_{-}}S_{g_{+}}+S_{f_{-}}S_{g_{-}}\\
=&S_{f_{+}g_{+}}+S_{f_{+}g_{-}}+S_{f_{-}g_{+}}+S_{f_{-}g_{-}}.\\
\end{split}
\end{equation}

By \eqref{57}, \eqref{34}, \eqref{33} and \eqref{35} we have
\begin{equation}
\begin{split}
T_{f_{+}}T_{g_{+}}&=T_{f_{+}g_{+}},\\
T_{f_{-}}T_{g_{+}}&=T_{f_{-}g_{+}},\\
T_{f_{-}}T_{g_{-}}&=T_{f_{-}g_{-}},\\
S_{f_{+}}S_{g_{+}}&=S_{f_{+}g_{+}},\\
S_{f_{+}}S_{g_{-}}&=S_{f_{+}g_{-}},\\
S_{f_{-}}S_{g_{-}}&=S_{f_{-}g_{-}}.
\end{split}
\end{equation}

Using Lemma \ref{4}, we have
\begin{equation}
\begin{split}
T_{f_{-}}{\Gamma^{*}_{\bar{g}_{+}}}&=\Gamma^{*}_{\overline{f_{-}g_{+}}},\\
{\Gamma^{*}_{\bar{f}_{+}}}{S_{{g}_{-}}}&={\Gamma^{*}_{\overline{{f}_{+}g_{-}}}},\\
\Gamma_{f_{-}}T_{g_{+}}&=\Gamma_{f_{-}g_{+}},\\
S_{f_{+}}{\Gamma_{g_{-}}}&=\Gamma_{f_{+}g_{-}}.
\end{split}
\end{equation}
So that \eqref{12} becomes
\begin{equation}\label{13}
\begin{split}
T_{f_{+}}T_{g_{-}}+{\Gamma^{*}_{\overline{f_{+}}}}{\Gamma_{g_{-}}}
&=T_{f_{+}g_{-}},
\end{split}
\end{equation}
\begin{equation}\label{56}
\begin{split}
T_{f_{+}}{\Gamma^{*}_{\bar{g}_{+}}}
+{\Gamma^{*}_{\bar{f}_{+}}}{S_{g_{+}}}&=
{\Gamma^{*}_{\overline{f_{+}g_{+}}}},\\
\end{split}
\end{equation}
\begin{equation}\label{58}
\begin{split}
\Gamma_{f_{-}}T_{g_{-}}
+S_{f_{-}}{\Gamma_{g_{-}}}
&=\Gamma_{f_{-}g_{-}},\\
\end{split}
\end{equation}
\begin{equation}\label{59}
\begin{split}
S_{f_{-}}S_{g_{+}}+\Gamma_{f_{-}}{\Gamma^{*}_{\overline{g_{+}}}}
&=S_{f_{-}g_{+}}.\\
\end{split}
\end{equation}
Applying Lemma \ref{2} to \eqref{56} and \eqref{58}, we obtain
\begin{equation}\label{38}
\begin{split}
\widehat{T}_{f_{+}}\widehat{T}_{g_{+}}&=\widehat{T}_{f_{+}g_{+}},\\
\widehat{T}_{f_{-}}\widehat{T}_{g_{-}}&=\widehat{T}_{f_{-}g_{-}}.
\end{split}
\end{equation}

An application of  Lemma \ref{2} to \eqref{38}, we see that $f_{+},~f_{-},~g_{+}~\text{and}~g_{-}$ satisfy
\begin{equation}\label{14}
\begin{split}
\rm \left\{ \begin{array}{lll}
   \ \textcircled{\small{a}}~ f_{+}=f_{+}(z_{1}),~g_{+}=g_{+}(z_{1});\text{or}  \\
     \ \textcircled{\small{b}} ~f_{+}=f_{+}(z_{2}),~g_{+}=g_{+}(z_{2});\text{or}\\
      \ \textcircled{\small{c}} ~f_{+}~\text{or}~g_{+} ~\text{is constant},
     \end{array} \right.
\end{split}
\end{equation}
and
\begin{equation}\label{41}
\begin{split}
\rm \left\{ \begin{array}{lll}
     \ \textcircled{\small{d}}~ f_{-}=f_{-}(z_{1}),~g_{-}=g_{-}(z_{1});\text{or}  \\
      \ \textcircled{\small{e}} ~f_{-}=f_{-}(z_{2}),~g_{-}=g_{-}(z_{2});\text{or} \\
       \ \textcircled{\small{f}} ~f_{-}~\text{or}~g_{-} ~\text{is constant}.
     \end{array} \right.
\end{split}
\end{equation}

Applying Lemma \ref{3} to \eqref{13} and \eqref{59}, we obtain
\begin{equation}\label{39}
\begin{split}
\widehat{T}_{f_{+}}\widehat{T}_{g_{-}}&=\widehat{T}_{f_{+}g_{-}},\\
\widehat{T}_{f_{-}}\widehat{T}_{g_{+}}&=\widehat{T}_{f_{-}g_{+}}.
\end{split}
\end{equation}

An application of Lemma \ref{3} to \eqref{39}, we see that $f_{+},~f_{-},~g_{+}~\text{and}~g_{-}$ satisfy
\begin{equation}\label{42}
\begin{split}
\rm \left\{ \begin{array}{lll}
   \  \textcircled{\small{1}} f_{+}=f_{+}(z_{1}),~g_{-}=g_{-}(z_{2});\text{or}  \\
     \ \textcircled{\small{2}} f_{+}=f_{+}(z_{2}),~g_{-}=g_{-}(z_{1});\text{or}\\
      \ \textcircled{\small{3}} f_{+}~\text{or}~g_{-} ~\text{is constant},
     \end{array} \right.
\end{split}
\end{equation}
and
\begin{equation}\label{43}
\begin{split}
\rm \left\{ \begin{array}{lll}
     \ \textcircled{\small{4}}f_{-}=f_{-}(z_{1}),~g_{+}=g_{+}(z_{2});\text{or}  \\
      \ \textcircled{\small{5}}f_{-}=f_{-}(z_{2}),~g_{+}=g_{+}(z_{1});\text{or} \\
       \ \textcircled{\small{6}}f_{-}~\text{or}~g_{+} ~\text{is constant}.
     \end{array} \right.
\end{split}
\end{equation}

Moreover, by substituting $f=f_{+}+f_{-}$ and $ g=g_{+}+g_{-}$ into $\widehat{T}_f\widehat{T}_g=\widehat{T}_{fg},$
we have
\begin{equation}\label{17}
\begin{split}
\widehat{T}_{f_{+}}\widehat{T}_{g_{+}}+\widehat{T}_{f_{+}}\widehat{T}_{g_{-}}
+\widehat{T}_{f_{-}}\widehat{T}_{g_{+}}+\widehat{T}_{f_{-}}\widehat{T}_{g_{-}}
=\widehat{T}_{f_{+}g_{+}}+\widehat{T}_{f_{+}g_{-}}
+\widehat{T}_{f_{-}g_{+}}+\widehat{T}_{f_{-}g_{-}}.
\end{split}
\end{equation}

Combining \eqref{38}, \eqref{39} and \eqref{17}, we conclude that $\widehat{T}_f\widehat{T}_g=\widehat{T}_{fg}$ if and only if
$\widehat{T}_{f_{+}}\widehat{T}_{g_{+}}=\widehat{T}_{f_{+}g_{+}},
~\widehat{T}_{f_{-}}\widehat{T}_{g_{-}}=\widehat{T}_{f_{-}g_{-}},
~\widehat{T}_{f_{+}}\widehat{T}_{g_{-}}=\widehat{T}_{f_{+}g_{-}},
~\widehat{T}_{f_{-}}\widehat{T}_{g_{+}}=\widehat{T}_{f_{-}g_{+}}$
if and only if ~$f_{+},~f_{-},~g_{+}$, and $g_{-}$ simultaneously satisfy \eqref{14}, \eqref{41}, \eqref{42} and \eqref{43}.

According to \eqref{14} and \eqref{41}, $f_{+},~f_{-},~g_{+}$ and $g_{-}$ can be grouped into four cases.

\noindent{\bf{Case 1.1}}\,\,\,: \textcircled{\small{a}}\textcircled{\small{d}}, \textcircled{\small{b}}\textcircled{\small{e}};

\noindent{\bf{Case 1.2}}\,\,\,: \textcircled{\small{a}}\textcircled{\small{e}}, \textcircled{\small{b}}\textcircled{\small{d}};

\noindent{\bf{Case 1.3}}\,\,\,: \textcircled{\small{a}}\textcircled{\small{f}}, \textcircled{\small{b}}\textcircled{\small{f}},
           \textcircled{\small{c}}\textcircled{\small{d}}, \textcircled{\small{c}}\textcircled{\small{e}};

\noindent{\bf{Case 1.4}}\,\,\,: \textcircled{\small{c}}\textcircled{\small{f}}.

In addition, according to \eqref{42} and \eqref{43}, $f_{+},~f_{-},~g_{+},~g_{-}$ can be divided into other four cases.

\noindent{\bf{Case 2.1}}\,\,\,: \textcircled{\small{1}}\textcircled{\small{4}}, \textcircled{\small{2}}\textcircled{\small{5}};

\noindent{\bf{Case 2.2}}\,\,\,: \textcircled{\small{1}}\textcircled{\small{5}}, \textcircled{\small{2}}\textcircled{\small{4}};

\noindent{\bf{Case 2.3}}\,\,\,: \textcircled{\small{1}}\textcircled{\small{6}}, \textcircled{\small{2}}\textcircled{\small{6}},
          \textcircled{\small{3}}\textcircled{\small{4}}, \textcircled{\small{3}}\textcircled{\small{5}};

\noindent{\bf{Case 2.4}}\,\,\,: \textcircled{\small{3}}\textcircled{\small{6}}.

We denote case \textcircled{\small{a}}\textcircled{\small{d}} as the case where
$f_{+},~f_{-},~g_{+},~g_{-}$ both satisfy \textcircled{\small{a}} with \textcircled{\small{d}}.
We find that \textcircled{\small{a}}\textcircled{\small{e}} and \textcircled{\small{1}}\textcircled{\small{5}}
are the same, they are (A),
\textcircled{\small{b}}\textcircled{\small{d}} and \textcircled{\small{2}}\textcircled{\small{4}} are the same, they are (B),
combining \textcircled{\small{a}}\textcircled{\small{d}} with \textcircled{\small{2}}\textcircled{\small{4}} gives (C).
The following table presents all cases, as desired.
\begin{table}[h!]
\footnotesize
\begin{tabular}{|p{0.8cm}|cc|cc|cccc|c|}
\hline
Case
&\textcircled{\tiny{a}}\textcircled{\tiny{d}}
&\textcircled{\tiny{b}}\textcircled{\tiny{e}}
&\textcircled{\tiny{a}}\textcircled{\tiny{e}}
&\textcircled{\tiny{b}}\textcircled{\tiny{d}}
&\textcircled{\tiny{a}}\textcircled{\tiny{f}}
&\textcircled{\tiny{b}}\textcircled{\tiny{f}}
&\textcircled{\tiny{c}}\textcircled{\tiny{e}}
&\textcircled{\tiny{c}}\textcircled{\tiny{d}}
&\textcircled{\tiny{c}}\textcircled{\tiny{f}}\\ \hline
\textcircled{\tiny{1}}\textcircled{\tiny{4}}
&(C)
&(C)
&(A)
&(B)
&(A),(C)
&(B),(C)
&(A),(C)
&(B),(C)
&(A),(C) \\
\textcircled{\tiny{2}}\textcircled{\tiny{5}}
&(C)
&(C)
&(A)
&(B)
&(A),(C)
&(B),(C)
&(A),(C)
&(B),(C)
&(A),(C) \\ \hline
\textcircled{\tiny{1}}\textcircled{\tiny{5}}
&(A)
&(A)
&(A)
&(C)
&(A)
&(A)
&(A)
&(A)
&(A) \\
\textcircled{\tiny{2}}\textcircled{\tiny{4}}
&(B)
&(B)
&(C)
&(B)
&(B)
&(B)
&(B)
&(B)
&(B) \\ \hline
\textcircled{\tiny{1}}\textcircled{\tiny{6}}
&(A),(C)
&(A),(C)
&(A)
&(B)
&(A),(C)
&(C)
&(A),(C)
&(C)
&(A),(C) \\
\textcircled{\tiny{3}}\textcircled{\tiny{5}}
&(A),(C)
&(A),(C)
&(A)
&(B)
&(A),(C)
&(C)
&(A),(C)
&(C)
&(A),(C) \\
\textcircled{\tiny{2}}\textcircled{\tiny{6}}
&(B),(C)
&(B),(C)
&(A)
&(B)
&(C)
&(B),(C)
&(C)
&(B),(C)
&(B),(C) \\
\textcircled{\tiny{3}}\textcircled{\tiny{4}}
&(B),(C)
&(B),(C)
&(A)
&(B)
&(C)
&(B),(C)
&(C)
&(B),(C)
&(B),(C) \\ \hline
\textcircled{\tiny{3}}\textcircled{\tiny{6}}
&(A),(B),(C)
&(A),(B),(C)
&(A)
&(B)
&(A),(C)
&(B),(C)
&(A),(C)
&(B),(C)
&(C) \\ \hline
\end{tabular}
~~~~~~~~~~~~~~
\caption{All Cases}
\label{table:1}
\end{table}
\end{proof}
\section{Commuting Toeplitz operators}
\hspace*{\parindent}

{\bf{Theorem \ref{26}}} can be eqreformulated in the following form.

Let $f,g\in h^{2}(\mathbb{T}^{2})$, and $f=f_{+}+f_{-},~g=g_{+}+g_{-},$ where $f_{+},\bar{f}_{-},g_{+},\bar{g}_{-}\in H^{2}(\mathbb{T}^{2}).$
Then $\widehat{T}_f\widehat{T}_g=\widehat{T}_g\widehat{T}_f$ if and only if one of the following conditions is satisfied:

~~(I)   ~$f_{+}=f_{+}(z_{1}),~f_{-}=f_{-}(z_{2}),~g_{+}=g_{+}(z_{1}),~g_{-}=g_{-}(z_{2});$

~(II)   ~$f_{+}=f_{+}(z_{2}),~f_{-}=f_{-}(z_{1}),~g_{+}=g_{+}(z_{2}),~g_{-}=g_{-}(z_{1});$

(III)  A nontrivial linear combination of $f$ and $g$ is constant.
\begin{proof}
If $\widehat{T}_{f}\widehat{T}_{g}=\widehat{T}_{g}\widehat{T}_{f},$
by  the matrix representations of $\widehat{T}_{f}\widehat{T}_{g}$ and $\widehat{T}_{g}\widehat{T}_{f}$
(see \eqref{37}), we have

\begin{equation*}
\widehat{T}_{f}\widehat{T}_{g}=\begin{bmatrix}T_{f}T_{g}+{\Gamma^{*}_{\bar{f}}}{\Gamma_{g}} & T_{f}{\Gamma^{*}_{\bar{g}}}+{\Gamma^{*}_{\bar{f}}}{S_{g}}\\
\Gamma_{f}T_{g}+S_{f}{\Gamma_{g}} & \Gamma_{f}{\Gamma^{*}_{\bar{g}}}+S_{f}S_{g}\end{bmatrix}
=\begin{bmatrix}T_{g}T_{f}+{\Gamma^{*}_{\bar{g}}}{\Gamma_{f}} & T_{g}{\Gamma^{*}_{\bar{f}}}+{\Gamma^{*}_{\bar{g}}}{S_{f}}\\
\Gamma_{g}T_{f}+S_{g}{\Gamma_{f}} & \Gamma_{g}{\Gamma^{*}_{\bar{f}}}+S_{g}S_{f}\end{bmatrix}=\widehat{T}_{g}\widehat{T}_{f}.
\end{equation*}
and hence
\begin{equation}\label{48}
\begin{split}
T_{f}{\Gamma^{*}_{\bar{g}}}+{\Gamma^{*}_{\bar{f}}}{S_{g}}
&= T_{g}{\Gamma^{*}_{\bar{f}}}+{\Gamma^{*}_{\bar{g}}}{S_{f}},\\
\Gamma_{f}T_{g}+S_{f}{\Gamma_{g}}&=\Gamma_{g}T_{f}+S_{g}{\Gamma_{f}}.
\end{split}
\end{equation}
By substituting $f=f_{+}+f_{-}$ and $g=g_{+}+g_{-}$ into \eqref{48}, we obtain
\begin{equation}\label{7}
\begin{split}
T_{f_{+}}{\Gamma^{*}_{\overline{g_{+}}}}+T_{f_{-}}{\Gamma^{*}_{\overline{g_{+}}}}
+{\Gamma^{*}_{\overline{f_{+}}}}{S_{g_{+}}}+{\Gamma^{*}_{\overline{f_{+}}}}{S_{g_{-}}}
&=T_{g_{+}}{\Gamma^{*}_{\overline{f_{+}}}}+T_{g_{-}}{\Gamma^{*}_{\overline{f_{+}}}}
+{\Gamma^{*}_{\overline{g_{+}}}}{S_{f_{+}}}+{\Gamma^{*}_{\overline{g_{+}}}}{S_{f_{-}}},\\
\Gamma_{f_{-}}T_{g_{+}}+\Gamma_{f_{-}}T_{g_{-}}+S_{f_{+}}{\Gamma_{g_{-}}}+S_{f_{-}}{\Gamma_{g_{-}}}
&=\Gamma_{g_{-}}T_{f_{+}}+\Gamma_{g_{-}}T_{f_{-}}+S_{g_{+}}{\Gamma_{f_{-}}}+S_{g_{-}}{\Gamma_{f_{-}}}.
\end{split}
\end{equation}
By Lemma \ref{4}, we have
\begin{equation*}
\begin{split}
T_{f_{-}}{\Gamma^{*}_{\overline{g_{+}}}}&={\Gamma^{*}_{\overline{g_{+}}}}{S_{f_{-}}},\\
{\Gamma^{*}_{\overline{f_{+}}}}{S_{g_{-}}}&=T_{g_{-}}{\Gamma^{*}_{\overline{f_{+}}}},\\
\Gamma_{f_{-}}T_{g_{+}}&=S_{g_{+}}{\Gamma_{f_{-}}},\\
S_{f_{+}}{\Gamma_{g_{-}}}&=\Gamma_{g_{-}}T_{f_{+}}.
\end{split}
\end{equation*}
Thus \eqref{7} becomes
\begin{equation}\label{62}
\begin{split}
T_{f_{+}}{\Gamma^{*}_{\overline{g_{+}}}}
+{\Gamma^{*}_{\overline{f_{+}}}}{S_{g_{+}}}
&=T_{g_{+}}{\Gamma^{*}_{\overline{f_{+}}}}
+{\Gamma^{*}_{\overline{g_{+}}}}{S_{f_{+}}},\\
\Gamma_{f_{-}}T_{g_{-}}+S_{f_{-}}{\Gamma_{g_{-}}}
&=\Gamma_{g_{-}}T_{f_{-}}+S_{g_{-}}{\Gamma_{f_{-}}}.
\end{split}
\end{equation}
Applying Lemma \ref{8} to \eqref{62}, we get
\begin{equation}
\begin{split}
\widehat{T}_{f_{+}}\widehat{T}_{g_{+}}&=\widehat{T}_{g_{+}}\widehat{T}_{f_{+}},\\
\widehat{T}_{f_{-}}\widehat{T}_{g_{-}}&=\widehat{T}_{g_{-}}\widehat{T}_{f_{-}}.
\end{split}
\end{equation}
Again by Lemma \ref{8} we see that $f_{+},~f_{-},~g_{+},~g_{-}$ satisfy
\begin{equation}\label{20}
\begin{split}
\rm \left\{ \begin{array}{lll}
 \ (a)~ f_{+}=f_{+}(z_{1}),g_{+}=g_{+}(z_{1});\text{or}  \\
 \ (b)~ f_{+}=f_{+}(z_{2}),g_{+}=g_{+}(z_{2});\text{or}\\
 \ (c)~ \text{a nontrivial linear combination of} ~f_{+}~ \text{and} ~g_{+}~ \text{is constant},\\
     \end{array} \right.
\end{split}
\end{equation}
and
\begin{equation}\label{49}
\begin{split}
\rm \left\{ \begin{array}{lll}
     \ (1)~f_{-}=f_{-}(z_{1}),g_{-}=g_{-}(z_{1});\text{or}  \\
      \ (2)~f_{-}=f_{-}(z_{2}),g_{-}=g_{-}(z_{2});\text{or} \\
       \ (3)~\text{a nontrivial linear combination of} ~f_{-}~ \text{and} ~g_{-}~ \text{is constant},.
     \end{array} \right.
\end{split}
\end{equation}

By assumption $\widehat{T}_f\widehat{T}_g=\widehat{T}_g\widehat{T}_f,$ we have
\begin{equation}\label{60}
\begin{split}
\widehat{T}_{f_{+}}\widehat{T}_{g_{+}}+\widehat{T}_{f_{+}}\widehat{T}_{g_{-}}
+\widehat{T}_{f_{-}}\widehat{T}_{g_{+}}+\widehat{T}_{f_{-}}\widehat{T}_{g_{-}}
=\widehat{T}_{g_{+}}\widehat{T}_{f_{+}}+\widehat{T}_{g_{+}}\widehat{T}_{f_{-}}
+\widehat{T}_{g_{-}}\widehat{T}_{f_{+}}+\widehat{T}_{g_{-}}\widehat{T}_{f_{-}}.
\end{split}
\end{equation}
Since $\widehat{T}_{f_{+}}\widehat{T}_{g_{+}}=\widehat{T}_{g_{+}}\widehat{T}_{f_{+}}$
and $\widehat{T}_{f_{-}}\widehat{T}_{g_{-}}=\widehat{T}_{g_{-}}\widehat{T}_{f_{-}}$,
\eqref{60} yields
\begin{equation}\label{10}
\begin{split}
\widehat{T}_{f_{+}}\widehat{T}_{g_{-}}-\widehat{T}_{g_{-}}\widehat{T}_{f_{+}}
=\widehat{T}_{g_{+}}\widehat{T}_{f_{-}}-\widehat{T}_{f_{-}}\widehat{T}_{g_{+}}.
\end{split}
\end{equation}

We need to find out the conditions for $f_{+},~f_{-},~g_{+},~g_{-}$ such that \eqref{10} holds.
In view of \eqref{20} and \eqref{49},
we distinguish four cases.

\noindent{\bf{Case 1.}}\,\,\,

(a)(2): $f_{+}=f_{+}(z_{1}),~g_{+}=g_{+}(z_{1}),~f_{-}=f_{-}(z_{2}),~g_{-}=g_{-}(z_{2}).$

(b)(1): $f_{+}=f_{+}(z_{2}),~g_{+}=g_{+}(z_{2}),~f_{-}=f_{-}(z_{1}),~g_{-}=g_{-}(z_{1}).$

According to Lemma \ref{3}, (a)(2) and (b)(1) ensure that \eqref{10} holds. Thus (I) and (II) would hold.

\noindent{\bf{Case 2.}}\,\,\,

(a)(1): $f_{+}=f_{+}(z_{1}),~g_{+}=g_{+}(z_{1}),~f_{-}=f_{-}(z_{1}),~g_{-}=g_{-}(z_{1}).$

(b)(2): $f_{+}=f_{+}(z_{2}),~g_{+}=g_{+}(z_{2}),~f_{-}=f_{-}(z_{2}),~g_{-}=g_{-}(z_{2}).$

One only  need consider case (a)(1), since (b)(2) is similar to (a)(1) .

In (a)(1),
taking the Berezin transform of both sides of  \eqref{10}, we have
\begin{equation*}
\begin{split}
\int_{\mathbb{T}^{2}}\big([\widehat{T}_{f_{+}},\widehat{T}_{g_{-}}]k_{{\lambda}_{1}}k_{{\lambda}_{2}}\big)
\overline{k_{{\lambda}_{1}}k_{{\lambda}_{2}}}d\sigma
&=\int_{\mathbb{T}^{2}}\big([\widehat{T}_{g_{+}},\widehat{T}_{f_{-}}]k_{{\lambda}_{1}}k_{{\lambda}_{2}}\big)
\overline{k_{{\lambda}_{1}}k_{{\lambda}_{2}}}d\sigma,\\
|{\lambda}_{2}|^{2}\big(f_{+}({\lambda}_{1})\overline{g_{-}({\lambda}_{1})}-\mathcal{P}[f_{+}g_{-}]({\lambda}_{1})\big)
&=|{\lambda}_{2}|^{2}\big(g_{+}({\lambda}_{1})\overline{f_{-}({\lambda}_{1})}-\mathcal{P}[g_{+}f_{-}]({\lambda}_{1})\big),\\
f_{+}({\lambda}_{1})\overline{g_{-}({\lambda}_{1})}-\mathcal{P}[{f_{+}g_{-}}]({\lambda}_{1})
&=g_{+}\small({\lambda}_{1}\small)\overline{f_{-}({\lambda}_{1})}-\mathcal{P}[g_{+}f_{-}]({\lambda}_{1})
~~\text{for}~{\lambda}_{2}\neq 0.
\end{split}
\end{equation*}

The second equality follows from Lemma \ref{1}, the last equation implies $f_{+}g_{-}-g_{+}f_{-}$
is harmonic function on $\mathbb{D}$, hence we get
\begin{equation}\label{11}
\begin{split}
\partial_{1}f_{+}\bar{\partial}_{1}g_{-}=\partial_{1}g_{+}\bar{\partial}_{1}f_{-}.
\end{split}
\end{equation}

We will use the method of \cite{axler1991commuting}  for finding all the solutions of \eqref{11}.

If $\partial_{1}g_{+}$ is identically 0 on $\mathbb{D},$ then \eqref{11} shows that either
$\partial_{1}f_{+}$ is identically 0 on $\mathbb{D}$ (so $f_{+}$ would be constant on $\mathbb{D}$
and (II) would hold)
or $\bar{\partial}_{1}g_{-}$ is identically 0 on $\mathbb{D}$ (so $g_{-}$ would be constant on $\mathbb{D}$
and (III) would hold). Similarly, if
$\bar{\partial}_{1}g_{-}$ is identically 0 on $\mathbb{D},$ then \eqref{11} shows that either
$\partial_{1}g_{+}$ is identically 0 on $\mathbb{D}$ (so $g_{+}$ would be constant on $\mathbb{D}$
and (III) would hold)
or $\bar{\partial}_{1}f_{-}$ is identically 0 on $\mathbb{D}$ (so $f_{-}$ be constant on $\mathbb{D}$
 and (I) would hold).
Thus we may assume that neither $\partial_{1}g_{+}$ nor $\bar{\partial}_{1}g_{-}$ is identically 0 on $\mathbb{D},$
and so \eqref{11} shows that
\begin{equation}
\begin{split}
\frac{\partial_{1}f_{+}}{\partial_{1}g_{+}}=\frac{\bar{\partial}_{1}f_{-}}{\bar{\partial}_{1}g_{-}}
\end{split}
\end{equation}
at all points of $\mathbb{D}$ except the countable set consisting of zeroes of $\partial_{1}g_{+}\bar{\partial}_{1}g_{-}.$
The left-hand side of the above equation is an analytic function (on $\mathbb{D}$ with the zeroes of $\partial_{1}g_{+}\bar{\partial}_{1}g_{-}$ deleted), and the right-hand side is the complex conjugate of an analytic function on the same domain, and so both sides must equal a constant
 $c\in\mathbb{D}.$ Thus $\partial_{1}f_{+}=c \partial_{1}g_{+}$ and $\bar{\partial}_{1}f_{-}=c \bar{\partial}_{1}g_{-}$
on $\mathbb{D}$. Hence $f_{+}-c g_{+}$ and $f_{-}-c g_{-}$ are constants on $\mathbb{D}$, and so their sum, which equals $f-cg$
is constant on $\mathbb{D}$, then (III) would hold.

\noindent{\bf{Case 3.}}\,\,\,

(c)(3): A nontrivial linear combination of $f_{+}$ and $g_{+}$ is constant and
a nontrivial linear combination of $f_{-}$ and $g_{-}$ is constant.

We may assume, without loss of generality, that $f_{+}=\alpha g_{+}+\lambda$ and $f_{-}={\beta}{g_{-}}+{\mu},$
where $\alpha,~\lambda,~\beta$ and  $\mu $ are constants. By substituting $f_{+}=\alpha g_{+}+\lambda$ and
$f_{-}={\beta}{g_{-}}+{\mu}$ into \eqref{10}, we obtain
\[(\alpha-\beta)(\widehat{T}_{g_{+}}\widehat{T}_{g_{-}}-\widehat{T}_{g_{-}}\widehat{T}_{g_{+}})=0.\]

If $\alpha-\beta=0$, then $f-\alpha g$ is constant, so (III) would hold. If $\widehat{T}_{g_{+}}\widehat{T}_{g_{-}}-\widehat{T}_{g_{-}}\widehat{T}_{g_{+}}=0$,
using Theorem \ref{3}, we have
\begin{equation}
\begin{split}
\rm \left\{ \begin{array}{lll}
     \ (X)~ g_{+}=g_{+}(z_{1}),g_{-}=g_{-}(z_{2});\text{or}  \\
      \ (Y)~ g_{+}=g_{+}(z_{2}),g_{-}=g_{-}(z_{1});\text{or} \\
       \ (Z)~ g_{+}~\text{or}~g_{-}~\text{is~constant}.
     \end{array} \right.
\end{split}
\end{equation}
Hence (X) implies (I), (Y) implies (II) and (Z) implies (III).

\noindent{\bf{Case 4.}}\,\,\,

(a)(3):  $f_{+}=f_{+}(z_{1}),~g_{+}=g_{+}(z_{1}),$ a nontrivial linear combination of $f_{-}$ and $g_{-}$ is constant.

(b)(3):  $f_{+}=f_{+}(z_{2}),~g_{+}=g_{+}(z_{2}),$ a nontrivial linear combination of $f_{-}$ and $g_{-}$ is constant.

(c)(1):  a nontrivial linear combination of $f_{+}$ and $g_{+}$ is constant, $f_{-}=f_{-}(z_{1}),~g_{-}=g_{-}(z_{1}).$

(c)(2):  a nontrivial linear combination of $f_{+}$ and $g_{+}$ is constant, $f_{-}=f_{-}(z_{2}),~g_{-}=g_{-}(z_{2}).$

Since (a)(3), (b)(3), (c)(1) and (c)(2) are similar, one only  need consider case (a)(3).
For (a)(3), without loss of generality, we may assume that there exist constants $\delta$ and $\eta $ such that $f_{-}=\delta g_{-}+\eta .$
Therefore, \eqref{10} entails that
\begin{equation*}
\begin{split}
\widehat{T}_{f_{+}}\widehat{T}_{g_{-}}-\widehat{T}_{g_{-}}\widehat{T}_{f_{+}}
&=\widehat{T}_{g_{+}}\widehat{T}_{f_{-}}-\widehat{T}_{f_{-}}\widehat{T}_{g_{+}}\\
&=\widehat{T}_{g_{+}}(\delta \widehat{T}_{g_{-}} + \eta I)-(\delta \widehat{T}_{g_{-}} + \eta I)\widehat{T}_{g_{+}}\\
&=\delta(\widehat{T}_{g_{+}}\widehat{T}_{g_{-}}-\widehat{T}_{g_{-}}\widehat{T}_{g_{+}}),
\end{split}
\end{equation*}
it follows that
\begin{equation}\label{61}
\begin{split}
\widehat{T}_{f_{+}-\delta g_{+}}\widehat{T}_{g_{-}}
=\widehat{T}_{g_{-}}\widehat{T}_{f_{+}-\delta g_{+}}.
\end{split}
\end{equation}

We can apply Lemma \ref{3} to \eqref{61}, if $f_{+}-\delta g_{+}$ is constant, then (III) would hold,
if $g_{-}$ is constant or $g_{-}=g_{-}(z_{2}),$ then (I) would hold.

Conversely, it is easy to show that a nontrivial linear combination of $f$ and $g$ is constant implies  $\widehat{T}_{f}\widehat{T}_{g}
=\widehat{T}_{g}\widehat{T}_{f}$. Since $$ \widehat{T}_{f}\widehat{T}_{g}-\widehat{T}_{g}\widehat{T}_{f}
=\widehat{T}_{f}\widehat{T}_{g}-\widehat{T}_{fg}+\widehat{T}_{fg}-\widehat{T}_{g}\widehat{T}_{f},$$
by Theorem \ref{27},
we obtain that (I) and (II) are true.
\end{proof}

Theorem \ref{26} has the following consequence.
\begin{corollary}
Let $f\in h^{2}(\mathbb{T}^{2})\bigcap L^{\infty}(\mathbb{T}^{2})$, then $\widehat{T}_f$ is normal if and only if
the range of $f$ lies on a line.
\end{corollary}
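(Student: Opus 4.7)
The plan is to translate normality of $\widehat{T}_f$ into the commutation statement already classified by Theorem~\ref{26}. First, since $Q$ is an orthogonal projection and $f\in L^{\infty}(\mathbb{T}^{2})$, a direct inner-product computation on polynomials shows $\widehat{T}_f^{*}=\widehat{T}_{\bar f}$, and this extends everywhere by boundedness. Consequently $\widehat{T}_f$ is normal if and only if $\widehat{T}_f\widehat{T}_{\bar f}=\widehat{T}_{\bar f}\widehat{T}_f$, so the whole problem reduces to applying Theorem~\ref{26} with $g=\bar f$.

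For the sufficiency, I would observe that if the range of $f$ lies on a line in $\mathbb{C}$, then either $f$ is constant (in which case $\widehat{T}_f$ is trivially normal) or there exist $a,b\in\mathbb{C}$ with $b\ne 0$ and a real-valued function $t$ on $\mathbb{T}^{2}$ such that $f=a+bt$. A one-line computation then gives $\bar b\,f-b\,\bar f=\bar b a-b\bar a$, which is a nontrivial linear combination of $f$ and $\bar f$ equal to a constant. Case~(III) of Theorem~\ref{26} yields $\widehat{T}_f\widehat{T}_{\bar f}=\widehat{T}_{\bar f}\widehat{T}_f$, proving normality.

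For the necessity, write $f=f_{+}+f_{-}$ with $f_{+},\bar{f}_{-}\in H^{2}(\mathbb{T}^{2})$. The key preparatory step is to decompose $\bar f$ in the same way: because $\bar{f}_{-}\in H^{2}_{0}\subset H^{2}$ but $\bar{f}_{+}$ need not vanish at the origin, the correct split inside $H^{2}\oplus\overline{H^{2}_{0}}$ is
\[
(\bar f)_{+}=\bar{f}_{-}+\overline{f_{+}(0)},\qquad (\bar f)_{-}=\bar{f}_{+}-\overline{f_{+}(0)}.
\]
Applying Theorem~\ref{26} with $g=\bar f$, one of (I), (II), (III) must hold. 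In case~(I), the requirement that $(\bar f)_{+}$ depends only on $z_{1}$ forces $\bar{f}_{-}$, and hence $f_{-}$, to depend only on $z_{1}$; combined with the hypothesis $f_{-}=f_{-}(z_{2})$ this makes $f_{-}$ a constant, and since $\bar{f}_{-}\in H^{2}_{0}$ we conclude $f_{-}=0$. The symmetric argument applied to $(\bar f)_{-}$ forces $f_{+}$ to be constant, so $f$ is constant. Case~(II) is identical after swapping $z_{1}$ and $z_{2}$. In case~(III), a nontrivial identity $\alpha f+\beta\bar f=c$ holds pointwise; writing $f=x+iy$ and separating real and imaginary parts yields a $2\times 2$ real linear system in $(x,y)$ with determinant $|\alpha|^{2}-|\beta|^{2}$. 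If this determinant is nonzero the system has a unique solution, so $f$ is constant; otherwise $|\alpha|=|\beta|$ and the solution set is a real-affine line containing the range of $f$. In every case the range of $f$ lies on a line.

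The main obstacle is the bookkeeping in the necessity direction: the constant correction $\overline{f_{+}(0)}$ that appears when one projects $\bar f$ onto $H^{2}\oplus\overline{H^{2}_{0}}$ must be tracked carefully, or else cases (I) and (II) of Theorem~\ref{26} would appear to permit nonconstant $f$. Once this is done, each of the three cases collapses cleanly to ``range on a line'', and the sufficiency argument above shows that no extraneous possibilities have been introduced.
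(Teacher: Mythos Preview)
Your proof is correct and follows the same route as the paper: translate normality into $\widehat{T}_f\widehat{T}_{\bar f}=\widehat{T}_{\bar f}\widehat{T}_f$ and invoke Theorem~\ref{26} with $g=\bar f$. The paper's own argument is much terser---it simply asserts that Theorem~\ref{26} here amounts to the existence of a nontrivial relation $\alpha f+\beta\bar f=\text{const}$ and leaves both the disposal of cases~(I)/(II) and the equivalence with ``range on a line'' implicit---so your explicit decomposition $(\bar f)_{\pm}$, the check that (I) and (II) force $f$ constant, and the $|\alpha|^2-|\beta|^2$ determinant analysis for (III) genuinely fill in details the paper glosses over.
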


\begin{proof} The Toeplitz operator $\widehat{T}_f$ is normal if and only if
 $\widehat{T}_f$ and $\widehat{T}_{\bar{f}}$ commute. By Theorem \ref{26} this is the case if
 and only if there are constants $\alpha$ and $\beta$, not both zero, such that
 $\alpha f+\beta \bar{f}$ is constant.
\end{proof}
{\bf Acknowledgments.} The authors are grateful to Professor Dechao
Zheng for various helpful suggestions. The second author is supported by the National Natural Science Foundation of China(11271388)
and the Program for University Innovation Team of Chongqing (CXTDX201601026).

\bibliographystyle{amsplain}

\end{document}